\newlength{\defbaselineskip}
\newcommand{\setlinespacing}[1]%
           {\setlength{\baselineskip}{#1 \defbaselineskip}}
\numberwithin{equation}{section}
\newtheorem{thm}{Theorem}[section]
\newtheorem{lem}[thm]{Lemma}
\theoremstyle{definition}
\theoremstyle{remark}
\numberwithin{equation}{section}
\begin{document}
\title[The radius of spatial analyticity]
{Lower bounds on the radius of spatial analyticity for the Kawahara equation}

\author{Jaeseop Ahn, Jimyeong Kim and Ihyeok Seo}

\thanks{This research was supported by NRF-2019R1F1A1061316.}

\subjclass[2010]{Primary: 32D15; Secondary: 35Q53}
\keywords{Spatial analyticity, Kawahara equation.}

\address{Department of Mathematics, Sungkyunkwan University, Suwon 16419, Republic of Korea}
\email{j.ahn@skku.edu}

\email{jimkim@skku.edu}

\email{ihseo@skku.edu}

\begin{abstract}
In this paper we obtain lower bounds on the radius of spatial analyticity of solutions to the Kawahara equation
$u_t  + uu_x + \alpha u_{xxx} + \beta u_{xxxxx} = 0$, $\beta\neq0$, given initial data which is analytic with a fixed radius.
It is shown that the uniform radius of spatial analyticity of solutions at later time $t$ can decay no faster than $1/|t|$ as $|t|\rightarrow\infty$.
\end{abstract}

\maketitle

\section{Introduction}\label{sec1}

Consider the Cauchy problem for the Kawahara equation
\begin{equation}\label{Kawahara}
\begin{cases}
u_t  + uu_x + \alpha u_{xxx} + \beta u_{xxxxx} = 0,\\
u(0, x) = u_0(x),
\end{cases}
\end{equation}
where $u:\mathbb{R}^{1+1}\rightarrow\mathbb{R}$ and $\alpha$, $\beta$ are real constants with $\beta \neq 0$.
This fifth-order KdV type equation has been derived to model gravity-capillary waves on a shallow layer
and magneto-sound propagation in plasmas (\cite{Ka,KS}).

The well-posedness of the above Cauchy problem with initial data in Sobolev spaces $H^s$ has been studied by several authors
(see e.g. \cite{CDT,WCD,CLMW,CG}).
In particular, it was shown in \cite{CDT} that \eqref{Kawahara} has a global solution when $s=0$.
This was improved by Wang, Cui and Deng \cite{WCD} to $s>-1/2$
and then by Chen, Li, Miao and Wu \cite{CLMW} to $s>-7/4$.
More recently, Chen and Guo \cite{CG} obtained the global well-posedness for $s\geq-7/4$.

In this paper, we are concerned with the persistence of spatial analyticity for the solutions of \eqref{Kawahara},
given initial data in a class of analyticity functions.
While the well-posedness theory in Sobolev spaces is well developed, nothing is known about the spatial analyticity for the Kawahara equation.
From now on, we focus on the situation where we consider a real-analytic initial data
with uniform radius of analyticity $\sigma_0>0$,
so there is a holomorphic extension to a complex strip
$$S_{\sigma_0}=\{x+iy:x,y\in\mathbb{R},\,|y|<\sigma_0\}.$$
Now, it is natural to ask whether this property may be continued analytically to a complex strip $S_{\sigma(t)}$ for all later times $t$,
but with a possibly smaller and shrinking radius of analyticity $\sigma(t)>0$.

This type of problem was first introduced by Kato and Masuda \cite{KM},
and has recently received a lot of attention for the Korteweg-de Vries (KdV) equation.
It was shown by Bona and Gruji\'{c} \cite{BG} that
the radius $\sigma(t)$ for the KdV equation can decay no faster than $e^{-t^2}$ as $|t|\rightarrow\infty$.
This was improved greatly by Bona, Gruji\'{c} and Kalisch \cite{BGK} to a polynomial decay rate of $|t|^{-12}$.
Later, Selberg and da Silva \cite{SS} obtained a further refinement, $\sigma(t)\geq c|t|^{-4/3+\varepsilon}$,
where the $\varepsilon$ exponent was also removed by Tesfahun \cite{Te2}. The rate was again improved by Huang and Wang \cite{HW} up to $|t|^{-1/4}$.
See also a recent related result for the quartic generalised KdV equation by Selberg and Tesfahun \cite{ST2}.

In spite of these many works for KdV equations, there have been no results on this issue for the Kawahara equation \eqref{Kawahara}
which is a fifth-order KdV type equation.
Motivated by this, we aim here to obtain the spatial analyticity for the Kawahara equation.

The Gevrey space, denoted $G^{\sigma,s}(\mathbb{R})$, $\sigma\geq0$, $s\in\mathbb{R}$, is a suitable function space to
study analyticity of solution. In our case, it will be used with the norm
$$\|f\|_{G^{\sigma,s}}=\| e^{\sigma|D|} \langle D\rangle^s f\|_{L^2},$$
where $\langle D\rangle=1+|D|$ and $D$ denotes the derivative.
According to the Paley-Wiener theorem\footnote{The proof given for $s=0$ in \cite{K} applies also for $s\in\mathbb{R}$
with some obvious modifications.} (see e.g. \cite{K}, p. 209),
a function $f$ belongs to $G^{\sigma,s}$ with $\sigma>0$
if and only if it is the restriction to the real line of a function $F$ which is holomorphic in the strip
$S_\sigma=\{ x+iy:x,y\in\mathbb{R},\,|y| < \sigma \}$ and satisfies
$\sup_{|y| < \sigma} \| F(x+iy)\|_{H_x^s} <\infty$.
Therefore every function in $G^{\sigma,s}$ with $\sigma>0$ has an analytic extension to the strip $S_\sigma$.
Based on this property of the Gevrey space, which is the key to studying spatial analyticity of solution,
our result below gives a lower bound $|t|^{-1}$ on the radius of analyticity $\sigma(t)$ of the solution to \eqref{Kawahara}
as the time $t$ tends to infinity.

\begin{thm}\label{thm1}
Let $u$ be the global $C^\infty$ solution of \eqref{Kawahara} with $u_0 \in G^{\sigma_0,s}(\mathbb{R})$ for some $\sigma_0>0$ and $s\in\mathbb{R}$.
Then, for all $t\in\mathbb{R}$
$$u(t)\in G^{\sigma(t),s}(\mathbb{R})$$
with $\sigma(t)\geq c|t|^{-1}$ as $|t| \rightarrow \infty$.
Here, $c>0$ is a constant depending on $\|u_0\|_{G^{\sigma_0,s}(\mathbb{R})}$.
\end{thm}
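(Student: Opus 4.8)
The plan is to prove persistence of analyticity in two stages: first establish a local-in-time result showing the radius stays at least half its initial value on a short time interval, then bootstrap it to obtain the $|t|^{-1}$ lower bound for all time. Let me think through the standard approach used for KdV-type equations (Bona--Grujić--Kalisch, Selberg--da Silva, Tesfahun) and adapt it to the fifth-order Kawahara dispersion.

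The plan is to combine a local well-posedness statement in the Gevrey class with an approximate conservation law for the Gevrey norm, then to iterate the latter over many short time steps so as to reach arbitrarily large times while quantifying the shrinking of the radius. The decisive structural feature is that the linear propagator $e^{-t(\alpha\partial_x^3+\beta\partial_x^5)}$ is generated by a skew-adjoint Fourier multiplier which commutes with the Gevrey weight $e^{\sigma|D|}\langle D\rangle^s$, so the linear evolution preserves every $G^{\sigma,s}$ norm and only the nonlinearity $uu_x$ can change $\|u(t)\|_{G^{\sigma,s}}$. Since the statement already furnishes a global $C^\infty$ solution, existence is not at issue; the entire task is to track the exponential decay rate of $\widehat{u(t)}$, equivalently the largest $\sigma$ for which $u(t)\in G^{\sigma,s}$.

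First I would record a short-time persistence result: if $u_0\in G^{\sigma_0,s}$, then on an interval $[0,\delta]$ the solution obeys $\sup_{[0,\delta]}\|u(t)\|_{G^{\sigma_0,s}}\le 2\|u_0\|_{G^{\sigma_0,s}}$, with the lifespan $\delta$ depending only on $\|u_0\|_{G^{\sigma_0,s}}$ and not on $\sigma_0$. This comes from running the contraction argument behind the $H^s$ well-posedness theory in an exponentially weighted Bourgain space, whose norm replaces $\langle D\rangle^s$ by $e^{\sigma|D|}\langle D\rangle^s$. The bilinear estimate transfers to this setting because the weight is dominated under convolution: on the Fourier support of a product one has $|\xi|\le|\xi_1|+|\xi_2|$, hence $e^{\sigma|\xi|}\le e^{\sigma|\xi_1|}e^{\sigma|\xi_2|}$, so the weight can be distributed onto the factors with no loss.

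The heart of the matter is the approximate conservation law, which I illustrate at $s=0$. Differentiating and discarding the conservative linear terms,
$$\frac{d}{dt}\|u(t)\|_{G^{\sigma,0}}^2 = -\big\langle e^{\sigma|D|}\partial_x(u^2),\,e^{\sigma|D|}u\big\rangle.$$
Writing $v=e^{\sigma|D|}u$ and invoking the identity $\langle\partial_x(v^2),v\rangle=0$ that produces $L^2$ conservation at $\sigma=0$, the right-hand side becomes $\langle e^{\sigma|D|}(u^2)-v^2,\,\partial_x v\rangle$, so everything is governed by the failure of $e^{\sigma|D|}$ to be multiplicative. On the Fourier side the symbol of $e^{\sigma|D|}(u^2)-v^2$ is an integral over $\xi=\xi_1+\xi_2$ whose kernel is bounded by
$$\big|e^{\sigma|\xi_1+\xi_2|}-e^{\sigma|\xi_1|}e^{\sigma|\xi_2|}\big|\le \sigma\,e^{\sigma|\xi_1|}e^{\sigma|\xi_2|}\big(|\xi_1|+|\xi_2|-|\xi_1+\xi_2|\big),$$
which supplies the crucial first power of $\sigma$ together with the smoothing gain $|\xi_1|+|\xi_2|-|\xi_1+\xi_2|\ge0$. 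Distributing $e^{\sigma|\xi_1|}e^{\sigma|\xi_2|}$ onto two copies of $u$ and the weight in $\partial_x v$ onto the third, and absorbing the derivative and, for general $s$, the polynomial weight $\langle D\rangle^s$ through the strong fifth-order dispersion of the Kawahara symbol, I expect to close the estimate in the form
$$\sup_{t\in[0,\delta]}\|u(t)\|_{G^{\sigma,s}}^2 \le \|u_0\|_{G^{\sigma,s}}^2 + C\,\sigma\sup_{t\in[0,\delta]}\|u(t)\|_{G^{\sigma,s}}^3.$$

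Finally I would iterate. Applying the last estimate on consecutive intervals $[0,\delta],[\delta,2\delta],\dots$ and summing, the squared Gevrey norm after $n$ steps exceeds its initial value by at most $nC\sigma$ times a uniform power of the locally controlled norm; a bootstrap argument then preserves $\sup\|u\|_{G^{\sigma,s}}\le2\|u_0\|_{G^{\sigma,s}}$ so long as $n\sigma$ stays below a fixed threshold. To reach a target time $T=n\delta$ this requires only $n\sigma\lesssim1$, so one may take $\sigma\sim1/n\sim\delta/T$, whence $u(T)\in G^{\sigma,s}$ with $\sigma\gtrsim1/T$; running the argument backward handles $t<0$, yielding $\sigma(t)\ge c|t|^{-1}$ as $|t|\to\infty$. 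The main obstacle is the trilinear estimate inside the approximate conservation law: it must close with $\sigma$ to the first power and no further loss, since any extra factor of $\sigma$ would push the exponent below $1$, and this is exactly where the quintic dispersion has to be exploited efficiently to control both the extra derivative and the polynomial weight. (For $s\ge0$ one may instead reduce to $s=0$, where $L^2$ conservation is exact, using $e^{\sigma'|\xi|}\langle\xi\rangle^s\lesssim e^{\sigma|\xi|}$ for any $\sigma'<\sigma$.)
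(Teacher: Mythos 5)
Your proposal follows essentially the same route as the paper: local well-posedness by contraction in exponentially weighted (Gevrey--Bourgain) spaces with lifespan depending only on the data norm, an almost conservation law for the $G^{\sigma,0}$ norm obtained from the commutator bound $e^{\sigma|\xi_1|}e^{\sigma|\xi_2|}-e^{\sigma|\xi_1+\xi_2|}\lesssim \sigma\min(|\xi_1|,|\xi_2|)\,e^{\sigma|\xi_1|}e^{\sigma|\xi_2|}$, and an iteration over $\sim T/\delta$ steps with $\sigma\sim\delta/T$ (and the reduction of general $s$ to $s=0$ via the Gevrey embedding in fact works for all $s\in\mathbb{R}$, not only $s\geq0$). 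The one substantive ingredient you assert rather than prove --- and correctly flag as the main obstacle --- is the bilinear smoothing estimate in Bourgain spaces (Lemma \ref{bilinear}, proved in the paper by Tao's $[k;Z]$-multiplier method with $b'$ ranging over an interval $[b,b+\varepsilon)$), which is needed both for the contraction and to close the trilinear error term with a single power of $\sigma$.
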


It should be noted that the existence of the global $C^\infty$ solution in the theorem is always guaranteed.
Indeed, observe that ${G^{0,s}}$ coincides with the Sobolev space $H^s$ and the embeddings
\begin{equation}\label{emb}
G^{\sigma,s}\subset G^{\sigma^\prime,s^\prime}
\end{equation}
hold for all $0\leq\sigma'<\sigma$ and $s,s'\in\mathbb{R}$.
As a consequence of this embedding with $\sigma'=0$ and the existing global well-posedness theory in $H^{s'}(=G^{0,s'})$ for $s'\geq-7/4$,
the Cauchy problem \eqref{Kawahara} has a unique smooth solution for all time, given initial data $u_0\in G^{\sigma_0,s}$
for any $\sigma_0>0$ and $s\in\mathbb{R}$.

We close this section with further references on the spatial analyticity for other dispersive equations such as
Schr\"odinger equations \cite{BGK2,Te,AKS}, Klein-Gordon equations \cite{P} and Dirac-Klein-Gordon equations \cite{ST,S}.

The outline of this paper is as follows:
In Section \ref{sec2} we introduce some function spaces such as Bourgain and Gevrey-Bourgain spaces, and their basic properties
which will be used in later sections.
In Section \ref{sec3} we present a bilinear estimate (Lemma \ref{bilinear}) in Gevrey-Bourgain spaces.
By making use of a contraction argument involving this estimate,
we prove that in a short time interval $0\leq t \leq\delta$ with $\delta>0$ depending on the norm of the initial data,
the radius of analyticity remains strictly positive.
Next, we prove an approximate conservation law, although the conservation of $G^{\sigma_0,s}$-norm of the solution does not hold exactly,
in order to control the growth of the solution in the time interval $[0, \delta]$, measured in the data norm $G^{\sigma_0,s}$.
Section \ref{sec4} is concerned with the proofs of such a local result and the almost conservation law.
In Section \ref{sec5}, we finish the proof of Theorem \ref{thm1} by iterating the local result based on the conservation law.
The final section, Section \ref{sec6}, is devoted to the proof of Lemma \ref{bilinear}.

Throughout this paper, the letter $C$ stands for a positive constant which may be different
at each occurrence. We denote $A\lesssim B$ and $A\sim B$ to mean $A\leq CB$ and $B\lesssim A\lesssim B$, respectively.
We also use $A\ll B$ to mean $A\leq cB$ for some small constant $c>0$.

\section{Function spaces}\label{sec2}
In this section we introduce some function spaces and their basic properties
which will be used in later sections for the proof of Theorem \ref{thm1}.

For $s,b\in\mathbb{R}$, we use  $X^{s,b}=X^{s,b}(\mathbb{R}^2)$ to denote the Bourgain space defined by the norm
$$
\|f\|_{X^{s,b}} =\| \langle\xi\rangle^s\langle\tau-p(\xi)\rangle^b\widehat{f}(\tau,\xi)\|_{L^2_{\tau,\xi}},
$$
where $p(\xi)=\alpha \xi^3 -\beta\xi^5$, $\langle\cdot\rangle=1+|\cdot|$ and $\widehat{f}$ denotes the space-time Fourier transform given by
$$
\widehat{f}(\tau,\xi)=\int_{\mathbb{R}^{2}} e^{-i(t\tau+x\xi)}f(t,x) \ dtdx.
$$
The restriction of the Bourgain space, denoted $X^{s,b}_\delta$, to a time slab $(0,\delta )\times\mathbb{R}$
is a Banach space when equipped with the norm
$$
\|f\|_{X^{s,b}_\delta}=\inf\big\{\|g\|_{X^{s,b}} : g=f\,\, \text{on}\,\, (0,\delta)\times\mathbb{R}\big\}.
$$
We also introduce the Gevrey-Bourgain space $X^{\sigma,s,b}=X^{\sigma,s,b}(\mathbb{R}^2)$ defined by the norm
$$
\|f\|_{X^{\sigma,s,b}}=\| e^{\sigma| \partial_x|}f\|_{X^{s,b}}.
$$
Its restriction $X^{\sigma,s,b}_\delta$ to a time slab $(0,\delta)\times\mathbb{R}$ is defined in a similar way as above,
and when $\sigma=0$ it coincides with the Bourgain space $X^{s,b}$.

The Gevrey-modification of the Bourgain spaces was used already by Bourgain \cite{B}
to study persistence of analyticity of solutions of the Kadomtsev-Petviashvili equation.
He proved that the radius of analyticity remains positive as long as the solution exists.
His argument is quite general and applies also to the KdV and Kawahara equations,
but it does not give any lower bound on the radius $\sigma(t)$ as $|t|\rightarrow\infty$.

We now list some basic properties of those spaces.
When $\sigma=0$, the proofs of the first two lemmas below can be found in Section 2.6 of \cite{T},
and the third lemma follows by the argument used for Lemma 3.1 of \cite{CKSTT2}.
But, by the substitution $f\rightarrow e^{\sigma|\partial_x|}f$, the properties of $X^{s,b}$ and its restrictions
carry over to $X^{\sigma,s,b}$.

\begin{lem}\label{lem00}
Let $\sigma \geq 0$, $s\in\mathbb{R}$ and $b>1/2$.
Then, $X^{\sigma,s,b}\subset C(\mathbb{R},G^{\sigma,s})$ and
$$\sup_{t\in\mathbb{R}}\|f(t)\|_{G^{\sigma,s}}\leq C\|f\|_{X^{\sigma,s,b}},$$
where $C>0$ is a constant depending only on $b$.
\end{lem}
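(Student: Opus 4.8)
The plan is to reduce everything to a one-dimensional Fourier inversion in the time variable at each fixed $t$, controlled by Cauchy--Schwarz. As noted in the text, the substitution $f\mapsto e^{\sigma|\partial_x|}f$ converts the $X^{\sigma,s,b}$ norm into the $X^{s,b}$ norm and the $G^{\sigma,s}$ norm into the $H^s$ norm, so it suffices to treat $\sigma=0$; equivalently, one may simply carry the weight $e^{\sigma|\xi|}$ through the computation unchanged, which is what I would do to keep the statement for general $\sigma$ transparent.

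First I would express the spatial Fourier transform of $f$ at a fixed time by inverting in $\tau$,
$$\widehat{f(t)}(\xi)=\frac{1}{2\pi}\int_{\mathbb{R}} e^{it\tau}\,\widehat{f}(\tau,\xi)\,d\tau,$$
so that $\|f(t)\|_{G^{\sigma,s}}^2=\int e^{2\sigma|\xi|}\langle\xi\rangle^{2s}|\widehat{f(t)}(\xi)|^2\,d\xi$. Inserting the factor $\langle\tau-p(\xi)\rangle^{b}\langle\tau-p(\xi)\rangle^{-b}$ inside the $\tau$-integral and applying Cauchy--Schwarz gives
$$|\widehat{f(t)}(\xi)|\leq\frac{1}{2\pi}\Big(\int\langle\tau-p(\xi)\rangle^{-2b}\,d\tau\Big)^{1/2}\Big(\int\langle\tau-p(\xi)\rangle^{2b}|\widehat{f}(\tau,\xi)|^2\,d\tau\Big)^{1/2}.$$
The crucial point is that $b>1/2$ makes the first factor a finite constant $C_b=\int\langle\tau\rangle^{-2b}\,d\tau$, independent of both $\xi$ and $t$ by translation invariance in $\tau$. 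Multiplying by $e^{\sigma|\xi|}\langle\xi\rangle^s$, squaring, and integrating in $\xi$ then yields
$$\|f(t)\|_{G^{\sigma,s}}^2\leq\frac{C_b}{(2\pi)^2}\int\!\!\int e^{2\sigma|\xi|}\langle\xi\rangle^{2s}\langle\tau-p(\xi)\rangle^{2b}|\widehat{f}(\tau,\xi)|^2\,d\tau d\xi=\frac{C_b}{(2\pi)^2}\|f\|_{X^{\sigma,s,b}}^2,$$
which is the desired bound with $C$ depending only on $b$, uniformly in $t$.

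For the continuity statement $f\in C(\mathbb{R},G^{\sigma,s})$, I would argue by density. For data whose space-time Fourier transform is smooth and compactly supported, the map $t\mapsto\widehat{f(t)}(\xi)=\frac{1}{2\pi}\int e^{it\tau}\widehat{f}(\tau,\xi)\,d\tau$ is manifestly continuous (indeed smooth) into $G^{\sigma,s}$, since differentiation under the integral and dominated convergence apply directly. Such functions are dense in $X^{\sigma,s,b}$, which is a weighted $L^2$ space in $(\tau,\xi)$, and the uniform estimate just established shows that if $f_n\to f$ in $X^{\sigma,s,b}$ then $f_n(t)\to f(t)$ in $G^{\sigma,s}$ uniformly in $t$; a uniform limit of continuous maps is continuous, so $f\in C(\mathbb{R},G^{\sigma,s})$.

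I expect the only genuinely delicate step to be the continuity assertion rather than the norm bound: the inequality itself is a direct Cauchy--Schwarz computation, whereas passing from the pointwise-in-$t$ estimate to membership in $C(\mathbb{R},G^{\sigma,s})$ requires the density-plus-uniform-convergence argument above to rule out any loss of continuity in the limit.
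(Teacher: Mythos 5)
Your proof is correct and is essentially the argument the paper invokes by citation: the paper proves nothing directly but refers to the standard $X^{s,b}\subset C(\mathbb{R},H^s)$ embedding for $b>1/2$ (Section 2.6 of Tao's book), obtained exactly by Fourier inversion in $\tau$ plus Cauchy--Schwarz, together with the substitution $f\mapsto e^{\sigma|\partial_x|}f$ to pass to $\sigma>0$. Your density argument for the continuity assertion is the standard completion of that sketch, so no further comment is needed.
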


\begin{lem}\label{lem2}
Let $\sigma \geq 0$, $s\in\mathbb{R}$, $-1/2<b<b^\prime<1/2$ and $\delta>0$. Then
$$
\|f\|_{X^{\sigma,s,b}_\delta} \leq C_{b,b'}\delta^{b^\prime-b}\|f\|_{X^{\sigma,s,b^\prime}_\delta},
$$
where $C_{b,b'}>0$ is a constant depending only on $b$ and $b'$.
\end{lem}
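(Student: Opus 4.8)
The plan is to reduce the statement, in two steps, to a purely one-dimensional estimate for multiplication by a sharp time cutoff, and then to prove that estimate by standard Sobolev-multiplier arguments. First I would remove the Gevrey weight. Writing $g=e^{\sigma|\partial_x|}f$, the definition gives $\|h\|_{X^{\sigma,s,b}}=\|e^{\sigma|\partial_x|}h\|_{X^{s,b}}$, so $h\mapsto e^{\sigma|\partial_x|}h$ is an isometry from $X^{\sigma,s,b}$ onto $X^{s,b}$. Since $e^{\sigma|\partial_x|}$ acts only in the $x$ variable, it commutes with fixing a time slice, and because its symbol $e^{\sigma|\xi|}$ never vanishes we have, for each fixed $t$, that $h(t,\cdot)=f(t,\cdot)$ if and only if $e^{\sigma|\partial_x|}h(t,\cdot)=g(t,\cdot)$. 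Thus $h\mapsto e^{\sigma|\partial_x|}h$ is a bijection between the admissible extensions of $f$ and those of $g$, and taking the infimum defining the restriction norm yields $\|f\|_{X^{\sigma,s,b}_\delta}=\|g\|_{X^{s,b}_\delta}$, and likewise with $b'$ in place of $b$. Consequently the claim for general $\sigma\ge0$ is exactly the case $\sigma=0$ applied to $g$, and it suffices to treat $\sigma=0$ (this is the substitution indicated before the lemma).

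So assume $\sigma=0$. For $\delta\ge1$ there is nothing to prove: since $\langle\tau-p(\xi)\rangle\ge1$ we have $\|\cdot\|_{X^{s,b}}\le\|\cdot\|_{X^{s,b'}}$ when $b<b'$, whence $\|f\|_{X^{s,b}_\delta}\le\|f\|_{X^{s,b'}_\delta}\le\delta^{b'-b}\|f\|_{X^{s,b'}_\delta}$. Assume henceforth $0<\delta\le1$, and fix $\eta\in C_c^\infty(\mathbb{R})$ with $\eta\equiv1$ on $[0,1]$. Given $f$, choose an extension $F$ with $\|F\|_{X^{s,b'}}\le2\|f\|_{X^{s,b'}_\delta}$. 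Then $\eta(t/\delta)F$ agrees with $f$ on $(0,\delta)\times\mathbb{R}$, so it is admissible and $\|f\|_{X^{s,b}_\delta}\le\|\eta(\cdot/\delta)F\|_{X^{s,b}}$; everything thus reduces to the free-space bound $\|\eta(\cdot/\delta)F\|_{X^{s,b}}\lesssim\delta^{b'-b}\|F\|_{X^{s,b'}}$. Writing $W(t)$ for the propagator with spatial symbol $e^{itp(\xi)}$, the change of variables $\tau\mapsto\tau+p(\xi)$ gives the isometry $\|G\|_{X^{s,b}}=\|W(-\cdot)G\|_{H^b_tH^s_x}$; since the scalar factor $\eta(t/\delta)$ commutes with $W(-t)$ and with $\langle D_x\rangle^s$, this turns the desired bound into the one-dimensional estimate
$$\|\eta(\cdot/\delta)w\|_{H^b(\mathbb{R})}\lesssim\delta^{b'-b}\|w\|_{H^{b'}(\mathbb{R})},\qquad -\tfrac12<b<b'<\tfrac12,$$
applied for each fixed spatial frequency and integrated in $\xi$, the weight $\langle\xi\rangle^s$ being inert.

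This one-dimensional estimate is the heart of the matter, and I expect it to be the only real obstacle. I would assemble it from two building blocks. For $0\le b'<1/2$, the Sobolev embedding $H^{b'}(\mathbb{R})\hookrightarrow L^{2/(1-2b')}$ combined with H\"older on $\mathrm{supp}\,\eta(\cdot/\delta)$, an interval of length $\sim\delta$, gives $\|\eta(\cdot/\delta)w\|_{L^2}\lesssim\delta^{b'}\|w\|_{H^{b'}}$, which is the case $b=0$. By duality the estimate for the pair $(b,b')$ is equivalent to that for $(-b',-b)$ (move $\eta(\cdot/\delta)$ across the pairing), so the block above also handles $b'=0$, $-1/2<b<0$. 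The two same-sign ranges $0\le b<b'<1/2$ and $-1/2<b<b'\le0$ then follow by interpolating in the Sobolev index between these endpoints, using that multiplication by $\eta(\cdot/\delta)$ is uniformly bounded on $H^r$ for $|r|<1/2$.

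The genuinely delicate configuration is the mixed one $-1/2<b<0<b'<1/2$. Here a crude kernel bound (Schur's test) on the Fourier side loses a power of $\delta$, because the weights $\langle\tau\rangle^b$ and $\langle\rho\rangle^{-b'}$ in the convolution $\widehat{\eta(\cdot/\delta)}\ast\widehat{w}$ pull in opposite directions; the sharp exponent is recovered only by retaining the $L^2$ orthogonality, for instance through a dyadic decomposition in the modulation variable and an almost-orthogonality (Cotlar--Stein) argument, or equivalently by the computation carried out in Section 2.6 of \cite{T}. Combining all cases gives the sharp gain $\delta^{b'-b}$ for the full range $-1/2<b<b'<1/2$, which then feeds back through the propagator isometry and the Gevrey substitution to establish the lemma as stated.
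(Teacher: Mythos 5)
Your reduction to $\sigma=0$ via the isometry $h\mapsto e^{\sigma|\partial_x|}h$ is exactly the substitution the paper indicates; beyond that remark the paper gives no proof at all, deferring to Section 2.6 of \cite{T}, so your reconstruction of that argument (choose a near-optimal extension $F$, multiply by a smooth cutoff $\eta(\cdot/\delta)$, conjugate by the propagator to reduce to the one-dimensional bound $\|\eta(\cdot/\delta)w\|_{H^b(\mathbb{R})}\lesssim\delta^{b'-b}\|w\|_{H^{b'}(\mathbb{R})}$) is the intended route and is sound. The one step you leave open, however, is not actually delicate, and your claim that in the mixed range $-1/2<b<0<b'<1/2$ the sharp exponent ``is recovered only by retaining the $L^2$ orthogonality'' is incorrect: that case follows at once from the two blocks you have already proved, by factoring the cutoff. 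Write $\eta=\eta_1\eta_2$ with $\eta_1,\eta_2\in C_c^\infty$ equal to $1$ on $[0,1]$; your dualized estimate gives $\|\eta_1(\cdot/\delta)v\|_{H^{b}}\lesssim\delta^{-b}\|v\|_{L^2}$ for $-1/2<b\le0$, and applying it to $v=\eta_2(\cdot/\delta)w$ followed by your Sobolev--H\"older estimate $\|\eta_2(\cdot/\delta)w\|_{L^2}\lesssim\delta^{b'}\|w\|_{H^{b'}}$ yields the product of gains $\delta^{-b}\cdot\delta^{b'}=\delta^{b'-b}$, exactly the claimed exponent. So no dyadic decomposition or Cotlar--Stein argument is needed, and deferring that case back to \cite{T} (which is the very reference the lemma rests on) would otherwise leave your write-up circular at its ``heart''; with the composition step inserted, your proof is complete and self-contained.
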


\begin{lem}\label{lem3}
Let $\sigma \geq 0$, $s\in\mathbb{R}$, $-1/2<b<1/2$ and $\delta>0$.
Then, for any time interval $I\subset[0,\delta]$,
$$
\|\chi_I f\|_{X^{\sigma,s,b}} \leq C\|f\|_{X^{\sigma,s,b}_\delta},
$$
where $\chi_I(t)$ is the characteristic function of $I$, and $C>0$ is a constant depending only on $b$.
\end{lem}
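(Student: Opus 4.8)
The plan is to prove the global multiplication bound $\|\chi_I g\|_{X^{\sigma,s,b}} \leq C\|g\|_{X^{\sigma,s,b}}$ on the full space first, with $C$ depending only on $b$, and then to deduce the stated restriction-norm estimate. For the latter step, observe that if $g$ is any extension of $f$ agreeing with $f$ on $(0,\delta)\times\mathbb{R}$, then, since $I\subset[0,\delta]$, the cutoff $\chi_I$ is supported where $f=g$, so $\chi_I f=\chi_I g$ almost everywhere. Hence $\|\chi_I f\|_{X^{\sigma,s,b}}=\|\chi_I g\|_{X^{\sigma,s,b}}\leq C\|g\|_{X^{\sigma,s,b}}$, and taking the infimum over all such $g$ gives $\|\chi_I f\|_{X^{\sigma,s,b}}\leq C\|f\|_{X^{\sigma,s,b}_\delta}$. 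Thus everything reduces to the global multiplication estimate. To remove the Gevrey weight I would first reduce to $\sigma=0$: by definition $\|\chi_I g\|_{X^{\sigma,s,b}}=\|e^{\sigma|\partial_x|}(\chi_I g)\|_{X^{s,b}}$, and since $e^{\sigma|\partial_x|}$ is a spatial Fourier multiplier while $\chi_I(t)$ depends only on $t$, the two commute, $e^{\sigma|\partial_x|}(\chi_I g)=\chi_I(e^{\sigma|\partial_x|}g)$. Writing $h=e^{\sigma|\partial_x|}g$, the claim becomes $\|\chi_I h\|_{X^{s,b}}\leq C\|h\|_{X^{s,b}}$, i.e.\ exactly the $\sigma=0$ case applied to $h$, so it suffices to treat $X^{s,b}$.

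For the $\sigma=0$ case the idea is to straighten the dispersive weight by conjugating with the free propagator. I would define $v$ through $\widehat v(\tau,\xi)=\widehat f(\tau+p(\xi),\xi)$; in physical space this is $v(t)=U(-t)f(t)$, where $U(t)$ has spatial symbol $e^{itp(\xi)}$. After the shift $\tau\mapsto\tau+p(\xi)$ this map is an isometry, $\|f\|_{X^{s,b}}=\|\langle\xi\rangle^s\langle\tau\rangle^b\widehat v\|_{L^2_{\tau,\xi}}$, and because $U(-t)$ acts only in $x$ at each fixed time it commutes with multiplication by $\chi_I(t)$, giving $U(-t)(\chi_I f)=\chi_I v$. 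Consequently $\|\chi_I f\|_{X^{s,b}}=\|\langle\xi\rangle^s\langle\tau\rangle^b\widehat{\chi_I v}\|_{L^2}$, and by Plancherel in $x$ and Fubini the spatial weight $\langle\xi\rangle^s$ factors out,
\[
\|\chi_I f\|_{X^{s,b}}^2=\int_{\mathbb{R}}\langle\xi\rangle^{2s}\,\|\chi_I\,\widetilde v(\cdot,\xi)\|_{H^b_t}^2\,d\xi,
\]
where $\widetilde v$ is the spatial Fourier transform. Thus the whole estimate collapses to the one-dimensional fact that multiplication by $\chi_I$ is bounded on $H^b_t(\mathbb{R})$, uniformly in the interval $I$, for $-1/2<b<1/2$.

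It remains to prove this scalar fact. Uniformity in $I$ is obtained by reduction to a half-line: writing $I=(a,c)$ one has $\chi_I=\chi_{(a,\infty)}-\chi_{(c,\infty)}$, and multiplication by $\chi_{(a,\infty)}$ is conjugate, via the translation $t\mapsto t-a$, to multiplication by the Heaviside function $H$; since translation is an isometry of $H^b$, it suffices to bound multiplication by $H$ on $H^b$ by a constant depending only on $b$. For $0<b<1/2$ I would use the Gagliardo characterization
\[
\|w\|_{H^b}^2\sim\|w\|_{L^2}^2+\int_{\mathbb{R}}\!\int_{\mathbb{R}}\frac{|w(t)-w(t')|^2}{|t-t'|^{1+2b}}\,dt\,dt'.
\]
The $L^2$ part is trivial; in the double integral the same-sign regions are directly dominated by the seminorm of $w$ (or vanish), and the only delicate contribution comes from the cross region $t>0>t'$, where the numerator reduces to $|w(t)|^2$. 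Carrying out the $t'$-integration there produces $\int_0^\infty|w(t)|^2\,t^{-2b}\,dt$, which is controlled by $\|w\|_{H^b}^2$ through the fractional Hardy inequality, valid precisely for $0<b<1/2$. The range $-1/2<b<0$ then follows by duality, since multiplication by the real function $\chi_I$ is self-adjoint and $(H^b)^*=H^{-b}$, and $b=0$ is the trivial $L^2$ bound.

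The main obstacle is exactly this scalar endpoint phenomenon: a sharp cutoff fails to be a multiplier on $H^{1/2}$, the cross term is genuinely borderline, and the constant in the Hardy inequality degenerates as $b\to1/2$, so the restriction $|b|<1/2$ in the hypothesis is essential and all the care must go into this single step. The reductions above—passing to the restriction norm, peeling off $e^{\sigma|\partial_x|}$, conjugating by the propagator, and factoring out $\langle\xi\rangle^s$—are by contrast soft, relying only on the commutation of the time cutoff with spatial operators and on Plancherel.
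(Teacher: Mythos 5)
Your proposal is correct, and every step checks out: the extension/infimum argument for passing from the global bound to the restriction norm (using $I\subset[0,\delta]$ so that $\chi_I f=\chi_I g$ a.e.), the commutation of $e^{\sigma|\partial_x|}$ with $\chi_I(t)$, the conjugation $\widehat v(\tau,\xi)=\widehat f(\tau+p(\xi),\xi)$ reducing everything to the uniform boundedness of $w\mapsto\chi_I w$ on $H^b_t$, and the scalar estimate itself (the cross-region integral $\int_{t>0}\int_{t'<0}|w(t)|^2|t-t'|^{-1-2b}\,dt'\,dt\sim b^{-1}\int_0^\infty|w(t)|^2t^{-2b}\,dt$ is indeed controlled by the fractional Hardy inequality exactly for $0<b<1/2$, and the self-adjointness/duality step for $-1/2<b<0$ is routine once one pairs on a dense class). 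However, your route differs from the paper's at the core: the paper gives no self-contained proof at all --- it observes that the substitution $f\to e^{\sigma|\partial_x|}f$ reduces to $\sigma=0$ (your commutation step made explicit) and then invokes ``the argument used for Lemma 3.1 of [CKSTT2]''. That cited argument, after the same conjugation by the free group, handles the sharp cutoff on the Fourier side, exploiting the decay $|\widehat{\chi_I}(\tau)|\lesssim\min\{|I|,|\tau|^{-1}\}$ (equivalently, Hilbert-transform bounds on the weighted space $L^2(\langle\tau\rangle^{2b}d\tau)$, whose weight is admissible precisely for $|b|<1/2$), with the interaction between dyadic modulation levels summable exactly in that range. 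Your physical-space alternative via the Gagliardo--Slobodeckij seminorm plus fractional Hardy is more elementary and makes the borderline failure at $|b|=1/2$ visible in the degeneration of a single constant, at the price of splitting the range into $b>0$ and $b<0$ via duality; the Fourier-side route treats all $|b|<1/2$ at once and yields quantitative modulation-localized information that is often reused elsewhere in $X^{s,b}$ arguments. Both approaches share the same outer skeleton, so your contribution is a genuinely different, and fully valid, proof of the one-dimensional multiplier bound that the paper outsources to the literature.
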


\section{Bilinear estimates in Gevrey-Bourgain spaces}\label{sec3}

In this section we present a bilinear estimate in Gevrey-Bourgain spaces, Lemma \ref{bilinear},
which plays a key role in obtaining the local well-posedness and almost conservation law in the next section.
With the aid of it, we shall also deduce an estimate, Lemma \ref{f}, which is another useful tool particularly
in obtaining the almost conservation law.

\begin{lem}\label{bilinear}
For all $\sigma\geq0$ and $s>-7/4$, there exist $1/2<b<1$ and $\varepsilon>0$ such that
\begin{equation}\label{comm}
\| \partial_x (uv)\|_{X^{\sigma,s,b'-1}}\leq C_{s,b,b'}\| u\|_{X^{\sigma,s,b}}\| v\|_{X^{\sigma,s,b}}
\end{equation}
for any $b'$ satisfying $b\leq b'<b+\varepsilon$.
Here, $C_{s,b,b'}>0$ is a constant depending only on $s$, $b$ and $b'$.
\end{lem}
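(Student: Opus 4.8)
The plan is first to strip off the analytic weight $e^{\sigma|\partial_x|}$ and reduce \eqref{comm} to the classical Bourgain-space bilinear estimate at $\sigma=0$, and then to prove the latter by a dyadic analysis organised around the resonance function of the symbol $p(\xi)=\alpha\xi^3-\beta\xi^5$. For the reduction, the only structural fact I need is the subadditivity $|\xi|\le|\xi_1|+|\xi_2|$ whenever $\xi=\xi_1+\xi_2$, which gives $e^{\sigma|\xi|}\le e^{\sigma|\xi_1|}e^{\sigma|\xi_2|}$ for all $\sigma\ge0$. Writing $\widehat{e^{\sigma|\partial_x|}\partial_x(uv)}(\tau,\xi)$ as a convolution over $\xi=\xi_1+\xi_2$, $\tau=\tau_1+\tau_2$ and inserting this bound yields the pointwise domination
$$e^{\sigma|\xi|}\big|\widehat{\partial_x(uv)}(\tau,\xi)\big|\le\big|\widehat{\partial_x(UV)}(\tau,\xi)\big|,$$
where $U,V$ are defined through $\widehat{U}=e^{\sigma|\xi|}|\widehat{u}|$ and $\widehat{V}=e^{\sigma|\xi|}|\widehat{v}|$. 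Since the weights $\langle\xi\rangle^s\langle\tau-p(\xi)\rangle^{b}$ are independent of the function, the $X^{s,b}$-norm depends on $\widehat{f}$ only through $|\widehat{f}|$; hence $\|\partial_x(uv)\|_{X^{\sigma,s,b'-1}}\le\|\partial_x(UV)\|_{X^{s,b'-1}}$ while $\|U\|_{X^{s,b}}=\|u\|_{X^{\sigma,s,b}}$ and $\|V\|_{X^{s,b}}=\|v\|_{X^{\sigma,s,b}}$. Thus it suffices to prove \eqref{comm} with $\sigma=0$.

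\emph{Trilinear reduction and resonance.} By $L^2$ duality, the $\sigma=0$ estimate is equivalent to a trilinear bound: with modulations $\sigma_0=\tau-p(\xi)$ and $\sigma_j=\tau_j-p(\xi_j)$ for $j=1,2$, it reduces to controlling
$$\int_{\ast}\frac{|\xi|\,\langle\xi\rangle^{s}}{\langle\xi_1\rangle^{s}\langle\xi_2\rangle^{s}\,\langle\sigma_0\rangle^{1-b'}\langle\sigma_1\rangle^{b}\langle\sigma_2\rangle^{b}}\,f\,g\,h\ \lesssim\ \|f\|_{L^2}\|g\|_{L^2}\|h\|_{L^2},$$
where $\ast$ denotes the hyperplane $\xi=\xi_1+\xi_2$, $\tau=\tau_1+\tau_2$ and $f,g,h\ge0$ are arbitrary. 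The decisive algebraic input is $\sigma_1+\sigma_2-\sigma_0=p(\xi)-p(\xi_1)-p(\xi_2)$, and a direct computation for the Kawahara symbol gives
$$p(\xi)-p(\xi_1)-p(\xi_2)=\xi\,\xi_1\,\xi_2\big(3\alpha-5\beta(\xi_1^2+\xi_1\xi_2+\xi_2^2)\big).$$
Consequently $\max(\langle\sigma_0\rangle,\langle\sigma_1\rangle,\langle\sigma_2\rangle)\gtrsim\langle\xi\,\xi_1\,\xi_2(3\alpha-5\beta(\xi_1^2+\xi_1\xi_2+\xi_2^2))\rangle$, and in the high-frequency regime, where the $\beta$-term dominates, the largest modulation is $\gtrsim|\xi|\,|\xi_1|\,|\xi_2|\max(|\xi_1|,|\xi_2|)^2$. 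This is the gain that must pay for the derivative $|\xi|$ and for the loss coming from negative $s$.

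\emph{Dyadic estimates and the critical case.} After dyadic localisation of the frequencies $|\xi|\sim N_0$, $|\xi_j|\sim N_j$ and the modulations $\langle\sigma_0\rangle\sim L_0$, $\langle\sigma_j\rangle\sim L_j$, I would split into cases according to which of $L_0,L_1,L_2$ is largest and according to the frequency interaction (high--low, low--high, or high--high). Since the largest modulation is $\gtrsim\langle H\rangle$ with $H=p(\xi)-p(\xi_1)-p(\xi_2)$, in each block one extracts from the corresponding weight a gain of order $\langle H\rangle^{-\theta}$ and bounds the two remaining factors by the standard $L^2$ and $L^4$ bilinear (Strichartz/convolution) estimates for the fifth-order group. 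The non-resonant and high--low regions are harmless, as the derivative $|\xi|\sim N_0$ and the negative-$s$ weights are then easily dominated. The crux is the high--high interaction $N_1\sim N_2\gtrsim N_0$: there the loss $\langle\xi_1\rangle^{-s}\langle\xi_2\rangle^{-s}\sim N_1^{-2s}$ is largest, while the resonance reaches its full strength $\langle H\rangle\sim N_0N_1^{4}$. Tracking the exact powers of $N_0$ and $N_1$, the requirement that the resulting dyadic sums converge is precisely what forces $s>-7/4$; verifying this borderline balance is the technical heart of the proof.

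\emph{The range of $b'$.} I would fix $b>1/2$ close to $1/2$ and carry out the previous step with a little room in the output modulation, proving the stronger estimate with exponent $-(1-b)+\varepsilon$ in place of $-(1-b')$ on $\langle\sigma_0\rangle$ for some $\varepsilon>0$; such slack is available because for each $s>-7/4$ the dyadic sums converge with a small power to spare away from the endpoint. Since $\langle\sigma_0\rangle\ge1$, the inequality $b'-1\le b-1+\varepsilon$ for $b\le b'<b+\varepsilon$ gives $\|\cdot\|_{X^{s,b'-1}}\le\|\cdot\|_{X^{s,b-1+\varepsilon}}$, so \eqref{comm} follows uniformly for all such $b'$. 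Combined with the first step, this establishes the lemma for every $\sigma\ge0$.
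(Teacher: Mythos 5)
Your strategy coincides with the paper's. The reduction to $\sigma=0$ via the triangle inequality $e^{\sigma|\xi|}\le e^{\sigma|\xi_1|}e^{\sigma|\xi_2|}$ and the substitution $\widehat U=e^{\sigma|\xi|}|\widehat u\,|$ is exactly the (implicit) first step of the paper, which then only proves the $\sigma$-free multiplier bound; your duality reduction is the paper's $[3;\mathbb{R}^2]$-multiplier formulation in different clothing; your resonance identity $p(\xi)-p(\xi_1)-p(\xi_2)=\xi\xi_1\xi_2\big(3\alpha-5\beta(\xi_1^2+\xi_1\xi_2+\xi_2^2)\big)$ is correct and reproduces the key relation $H\sim N_{max}^4N_{min}$ of Lemma \ref{el4}; you correctly identify the high--high interaction as the source of the threshold $s>-7/4$; and your monotonicity observation $\|\cdot\|_{X^{s,b'-1}}\le\|\cdot\|_{X^{s,b-1+\varepsilon}}$ for $b'\le b+\varepsilon$ is a legitimate, arguably cleaner, way to obtain the full range of $b'$ from one estimate (the paper instead carries $b'$ through every case and collects conditions such as $b'<(4s+11)/8$ and $b'<(8s+19)/10$).

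The gap is that the argument stops where the work begins. The substance of the paper's Section \ref{sec6} is the case-by-case dyadic summation: the two regimes $L_{max}\sim H$ and $L_{max}\sim L_{med}\gg H$; within the first, the three block estimates \eqref{el6}--\eqref{el8}; within each, the splits according to which frequency is $N_{min}$, whether $N_{min}\le1$ or $N_{min}\ge1$, and which term realises the minimum in $\min\{H,(N_{max}/N_{min})L_{med}\}$. Only by running all of these sums does one see which exponent inequalities must hold simultaneously, that they are open conditions compatible with some $b>1/2$ exactly when $s>-7/4$, and how small $\varepsilon$ must be taken. Your own phrase that ``verifying this borderline balance is the technical heart of the proof'' concedes precisely this: what you have is a correct and well-aimed plan, not a proof. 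A smaller point: you propose to derive the dyadic block bounds from $L^2$/$L^4$ bilinear estimates for the fifth-order group, whereas the paper imports them ready-made from Chen--Li--Miao--Wu; that substitution is viable but is additional work the proposal does not carry out.
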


It is worth comparing this lemma with the analogous result by Chen, Li, Miao and Wu \cite{CLMW} (\textit{cf}. Proposition 2.2),
where the proof for \eqref{comm} is given only in the case $b'=b$ to obtain local well-posedness in Sobolev spaces.
On the contrary, it is crucial for the present issue to have $b'$ that can range over a small interval for which \eqref{comm} holds.
Similarly as in \cite{CLMW}, we apply Tao's $[k;Z]$-multiplier norm method \cite{T2}
to our case where $b\leq b'<b+\varepsilon$.

We shall postpone the detailed proof of Lemma \ref{bilinear} until the last section, Section \ref{sec6}.
Instead here we derive the following lemma from Lemma \ref{bilinear},
which, along with the function $f$ defined here, plays a crucial role in obtaining the almost conservation law in the next section.

\begin{lem}\label{f}
Let
\begin{equation}\label{fff}
f(u)=\frac{1}{2}\partial_x\Big((e^{\sigma|\partial_x|}u)^2-e^{\sigma|\partial_x|}u^2\Big).
\end{equation}
Given $0\leq\rho\leq 1$, there exist $1/2<b<1$ and $C>0$ such that
$$
\| f(u)\|_{X^{0,b-1}} \leq C\sigma^\rho\| u\|^2_{X^{\sigma,0,b}}
$$
for all $\sigma>0$ and $v\in X^{\sigma,0,b}$.
\end{lem}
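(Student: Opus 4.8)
The plan is to pass to the space–time Fourier side, extract the factor $\sigma^\rho$ from the exponential defect by an elementary scalar inequality, and then reduce the resulting bilinear expression to Lemma \ref{bilinear} applied at a lowered regularity index. Writing $U=e^{\sigma|\partial_x|}u$, so that $\|u\|_{X^{\sigma,0,b}}=\|U\|_{X^{0,b}}$, and using $\widehat u(\tau_j,\xi_j)=e^{-\sigma|\xi_j|}\widehat U(\tau_j,\xi_j)$ in the Fourier transform of \eqref{fff}, I find
$$
\widehat{f(u)}(\tau,\xi)=\frac{i\xi}{2}\int\bigl(1-e^{-\sigma\theta}\bigr)\,\widehat U(\tau_1,\xi_1)\,\widehat U(\tau-\tau_1,\xi-\xi_1)\,d\tau_1 d\xi_1,
$$
where $\theta=|\xi_1|+|\xi-\xi_1|-|\xi|\geq0$ by the triangle inequality. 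The entire content of the lemma is encoded in this symbol: it vanishes when $\xi_1$ and $\xi-\xi_1$ share a sign, and in general $\theta\leq 2\min(|\xi_1|,|\xi-\xi_1|)$, so the extra growth produced by the defect lives on the \emph{lower} of the two frequencies.

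Next I would extract the power of $\sigma$. Since $1-e^{-x}\leq\min(1,x)\leq x^\rho$ for $x\geq0$ and $0\leq\rho\leq1$, applied with $x=\sigma\theta$, together with $\theta\leq 2\min(|\xi_1|,|\xi-\xi_1|)$, the symbol obeys $1-e^{-\sigma\theta}\lesssim\sigma^\rho\min(|\xi_1|,|\xi-\xi_1|)^\rho$. Because the $X^{0,b-1}$ norm sees only the moduli of Fourier transforms, this yields the pointwise domination
$$
\langle\xi\rangle^{0}\bigl|\widehat{f(u)}(\tau,\xi)\bigr|\lesssim\sigma^\rho\,|\xi|\int\min(|\xi_1|,|\xi-\xi_1|)^\rho\,|\widehat U(\tau_1,\xi_1)|\,|\widehat U(\tau-\tau_1,\xi-\xi_1)|\,d\tau_1 d\xi_1,
$$
so it suffices to dominate the bilinear multiplier $|\xi|\min^\rho$, carried with the trivial output weight $\langle\xi\rangle^{0}$, by one that is realizable through Lemma \ref{bilinear}.

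I expect the main obstacle to be precisely this factor $\min^\rho$: it loads up to $\rho\le1$ additional derivatives, so the nonlinearity $\partial_x(U\cdot U)$ appears to carry $1+\rho$ derivatives while the $X^{0}$-norms on the right leave no regularity to spare. The resolution is that these derivatives sit on the low frequency, and the strong fifth-order smoothing underlying Lemma \ref{bilinear} absorbs them once the regularity is lowered. The key pointwise inequality is
$$
\min(|\xi_1|,|\xi-\xi_1|)^\rho\lesssim\langle\xi\rangle^{-\rho}\langle\xi_1\rangle^{\rho}\langle\xi-\xi_1\rangle^{\rho},
$$
which follows from $\langle\xi\rangle\min(|\xi_1|,|\xi-\xi_1|)\lesssim\langle\xi_1\rangle\langle\xi-\xi_1\rangle$ (as $\langle\xi\rangle\lesssim\langle\xi-\xi_1\rangle$ when $|\xi_1|\leq|\xi-\xi_1|$, and symmetrically). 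It simultaneously deposits $\rho$ derivatives on each input and borrows the compensating weight $\langle\xi\rangle^{-\rho}$ on the output.

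To finish, introduce $A$ by $\widehat A=\langle\xi\rangle^{\rho}|\widehat U|$, so that $\|A\|_{X^{-\rho,b}}=\|U\|_{X^{0,b}}$. Inserting the last inequality into the previous display and pulling $\langle\xi\rangle^{-\rho}$ outside the integral, the right-hand side becomes exactly $\sigma^\rho\langle\xi\rangle^{-\rho}|\widehat{\partial_x(A^2)}|$ (up to a constant), whence
$$
\|f(u)\|_{X^{0,b-1}}\lesssim\sigma^\rho\,\|\partial_x(A^2)\|_{X^{-\rho,b-1}}.
$$
Applying Lemma \ref{bilinear} with $\sigma=0$, $s=-\rho$ and $b'=b$ — which is legitimate since $-\rho>-7/4$ for every $0\leq\rho\leq1$ — gives $\|\partial_x(A^2)\|_{X^{-\rho,b-1}}\lesssim\|A\|_{X^{-\rho,b}}^{2}=\|U\|_{X^{0,b}}^{2}$. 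Combining the two bounds yields the claim, with $b$ the exponent furnished by Lemma \ref{bilinear} at regularity $-\rho$; note that the full range $0\le\rho\le1$ is exactly what the condition $s>-7/4$ permits, and the endpoint $\rho=0$ reduces directly to the bilinear estimate at $s=0$.
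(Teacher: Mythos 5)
Your proof is correct and follows essentially the same route as the paper's: extract the factor $\sigma^\rho\min(|\xi_1|,|\xi_2|)^\rho$ from the exponential defect, convert $\min^\rho$ into $\langle\xi\rangle^{-\rho}\langle\xi_1\rangle^\rho\langle\xi_2\rangle^\rho$ via the triangle inequality, and invoke Lemma \ref{bilinear} with $\sigma=0$, $s=-\rho$, $b'=b$. The only cosmetic difference is that you normalize by $U=e^{\sigma|\partial_x|}u$ and prove the key scalar bound $1-e^{-\sigma\theta}\lesssim(\sigma\min)^\rho$ directly, where the paper cites the equivalent inequality from Lemma 12 of \cite{SS}.
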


\begin{proof}
Notice first that
\begin{align*}
\widehat{v_1v_2}(\tau,\xi) &= \widehat{v_1}\ast\widehat{v_2}(\tau,\xi)\\
&= \int_{\mathbb{R}^2} \widehat{v_1}(\tau_1,\xi_1)\widehat{v_2}(\tau-\tau_1,\xi-\xi_1)\,d\tau_1d\xi_1\\
&=\int_{\mathbb{R}^2} \widehat{v_1}(\tau_1,\xi_1)\widehat{v_2}(\tau_2,\xi_2) \,d\tau_1d\xi_1
\end{align*}
with $\tau_2=\tau-\tau_1$ and $\xi_2=\xi-\xi_1$.
With this as well as the estimate\footnote{This estimate can be found in Lemma 12 of \cite{SS}.}
$$
e^{\sigma|\xi_1|}e^{\sigma|\xi_2|}-e^{\sigma|\xi_1+\xi_2|}\leq\big(2\sigma \min\{|\xi_1|,|\xi_2|\}\big)^\rho e^{\sigma|\xi_1|}e^{\sigma|\xi_2|}
$$
where $\sigma>0$, $0\leq\rho\leq 1$ and $\xi_1,\xi_2 \in \mathbb{R}$,
one can see that
\begin{align*}
&\| f(u)\|_{X^{0,b-1}}\\
&\sim\bigg \| \frac{\xi}{\langle\tau-p(\xi)\rangle^{1-b}}\int_{\mathbb{R}^2} e^{\sigma|\xi_1|}\hat{u}(\tau_1,\xi_1)e^{\sigma|\xi_2|}\hat{u}(\tau_2,\xi_2)-e^{\sigma|\xi|}\hat{u}(\tau_1,\xi_1)\hat{u}(\tau_2,\xi_2)\, d\tau_1d\xi_1\bigg \|_{L^2_{\tau,\xi}}\\
&\sim\bigg \| \frac{\xi}{\langle\tau-p(\xi)\rangle^{1-b}}\int_{\mathbb{R}^2}\big(e^{\sigma|\xi_1|}e^{\sigma|\xi_2|}-e^{\sigma|\xi|}\big)\hat{u}(\tau_1,\xi_1)
\hat{u}(\tau_2,\xi_2) \, d\tau_1 d\xi_1\bigg \|_{L^2_{\tau,\xi}}\\
&\lesssim\bigg \| \frac{\xi}{\langle\tau-p(\xi)\rangle^{1-b}}\int_{\mathbb{R}^2}\big[2\sigma \min(|\xi_1|,|\xi_2|)\big]^\rho e^{\sigma|\xi_1|}e^{\sigma|\xi_2|}\hat{u}(\tau_1,\xi_1)\hat{u}(\tau_2,\xi_2) \, d\tau_1 d\xi_1\bigg \|_{L^2_{\tau,\xi}},
\end{align*}
where  $\tau_2=\tau-\tau_1$ and $\xi_2=\xi-\xi_1$.
Here, from the triangle inequality,
$$
\min(|\xi_1|,|\xi_2|)\leq 2\frac{(1+|\xi_1|)(1+|\xi_2|)}{(1+|\xi_1+\xi_2|)}=2\frac{\langle\xi_1\rangle\langle\xi_2\rangle}{\langle\xi\rangle},
$$
and therefore
\begin{align*}
\| f(u)\|_{X^{0,b-1}}&\lesssim\sigma^\rho\bigg \| \frac{\xi\langle\xi\rangle^{-\rho}}{\langle\tau-p(\xi)\rangle^{1-b}}
\int_{\mathbb{R}^2}e^{\sigma|\xi_1|}\langle\xi_1\rangle^{\rho}\hat{u}(\tau_1,\xi_1)
e^{\sigma|\xi_2|}\langle\xi_2\rangle^{\rho}\hat{u}(\tau_2,\xi_2)\, d\tau_1 d\xi_1\bigg \|_{L^2_{\tau,\xi}}\\
&=\sigma^\rho\Big\|\partial_x\Big(e^{\sigma|\partial_x|}\langle\partial_x\rangle^\rho u\cdot
e^{\sigma|\partial_x|}\langle\partial_x\rangle^\rho u\Big)\Big\|_{X^{0,-\rho,b-1}}\\
&\lesssim\sigma^\rho\big\| e^{\sigma|\partial_x|}\langle\partial_x\rangle^{\rho} u\big\|^2_{X^{0,-\rho,b}}\\
&=\sigma^\rho\| u\|^2_{X^{\sigma,0,b}}
\end{align*}
as desired. Here we used Lemma \ref{bilinear} with $\sigma=0$, $s=-\rho$ and $b'=b$ for the second inequality.
\end{proof}

\section{Local well-posedness and almost conservation law}\label{sec4}

In this section we first establish the local well-posedness and then the almost conservation law,
by making use of the bilinear estimate in the previous section.
They lie at the core of the proof of Theorem \ref{thm1} in the next section.

\subsection{Local well-posedness}\label{subsec4.1}

Based on Picard's iteration in the $X_\delta^{\sigma,s,b}$-space and Lemma \ref{lem00},
we establish the following local well-posedness in $G^{\sigma, s}$, with a lifespan $\delta > 0$.
Equally the radius of analyticity remains strictly positive in a short time interval $0 \leq t \leq\delta$,
where $\delta > 0$ depends on the norm of the initial data.

\begin{thm}\label{thm2}
Let $\sigma>0$ and $s>-7/4$.
Then, for any $u_0 \in G^{\sigma,s}$, there exist $\delta > 0$ and a unique solution u of the Cauchy problem \eqref{Kawahara} on the time interval $[0,\delta]$ such that $u \in C([0,\delta],G^{\sigma,s})$ and the solution depends continuously on the data $u_0$.
Here we have
\begin{equation}\label{deldel}
\delta = c_0(1+\| u_0 \|_{G^{\sigma.s}})^{-a}
\end{equation}
for some constants $c_0>0$ and $a>2$ depending only on $s$.
Furthermore, if\ $1/2<b<1$, the solution $u$ satisfies
\begin{equation}\label{lowe}
\| u \|_{X^{\sigma,s,b}_\delta} \leq C\| u_0 \|_{G^{\sigma,s}}
\end{equation}
with a constant $C>0$ depending only on $b$.
\end{thm}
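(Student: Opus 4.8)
The plan is to prove Theorem~\ref{thm2} by a standard contraction-mapping argument applied to the Duhamel (integral) formulation of the Cauchy problem in the Gevrey-Bourgain space $X^{\sigma,s,b}_\delta$. First I would rewrite \eqref{Kawahara} in its equivalent integral form
\begin{equation*}
u(t) = W(t)u_0 - \int_0^t W(t-t')\,\big(u u_x\big)(t')\,dt',
\end{equation*}
where $W(t)=e^{-t(\alpha\partial_x^3+\beta\partial_x^5)}$ is the linear Kawahara propagator, and set up the nonlinear map $\Phi(u)$ given by the right-hand side. The goal is to show that for a suitable $\delta>0$ the map $\Phi$ is a contraction on a closed ball in $X^{\sigma,s,b}_\delta$, so that the unique fixed point is the desired solution.

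The key ingredients are the standard linear estimates for the Bourgain space (the homogeneous estimate $\|W(t)u_0\|_{X^{\sigma,s,b}_\delta}\lesssim\|u_0\|_{G^{\sigma,s}}$ and the inhomogeneous/Duhamel estimate controlling $\|\int_0^t W(t-t')F(t')\,dt'\|_{X^{\sigma,s,b}_\delta}$ by $\|F\|_{X^{\sigma,s,b-1}_\delta}$), together with the bilinear estimate of Lemma~\ref{bilinear} and the time-localization gain of Lemma~\ref{lem2}. The order of steps would be: (i) apply the linear estimates to bound $\|\Phi(u)\|_{X^{\sigma,s,b}_\delta}$ by $C\|u_0\|_{G^{\sigma,s}}+C\|\partial_x(u^2)\|_{X^{\sigma,s,b-1}_\delta}$; (ii) apply Lemma~\ref{bilinear}, choosing $b'$ slightly larger than $b$ in the admissible range, to estimate the nonlinear term, which gives a factor $\|u\|^2_{X^{\sigma,s,b'}_\delta}$; (iii) invoke Lemma~\ref{lem2} to trade the difference $b'-b$ for a positive power $\delta^{b'-b}$ of the time length, converting the quadratic bound into $C\delta^{\theta}\|u\|^2_{X^{\sigma,s,b}_\delta}$ for some $\theta=b'-b>0$. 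A parallel computation on the difference $\Phi(u)-\Phi(w)$ yields $\|\Phi(u)-\Phi(w)\|_{X^{\sigma,s,b}_\delta}\lesssim\delta^{\theta}(\|u\|+\|w\|)\|u-w\|$. Choosing the ball radius comparable to $\|u_0\|_{G^{\sigma,s}}$ and then $\delta$ small enough that $C\delta^{\theta}\|u_0\|_{G^{\sigma,s}}<1/2$ makes $\Phi$ a self-map and a strict contraction; solving this smallness condition for $\delta$ produces the explicit form \eqref{deldel} with $a=1/\theta$ (adjusted to exceed $2$ by shrinking $\theta$ if necessary), and the fixed-point bound immediately gives \eqref{lowe} via step~(i). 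Continuous dependence on the data follows from the same Lipschitz estimate, and membership in $C([0,\delta],G^{\sigma,s})$ follows from Lemma~\ref{lem00} since $b>1/2$.

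The main obstacle I anticipate is bookkeeping the interplay between the two exponents $b$ and $b'$ so that the time factor $\delta^{b'-b}$ is genuinely available. The subtlety is that Lemma~\ref{lem2} lowers the exponent (requiring $b<b'<1/2$), whereas the contraction needs $b>1/2$ for the embedding of Lemma~\ref{lem00}; this tension is resolved by applying Lemma~\ref{lem2} at the \emph{dual} level $b-1<b'-1$ appearing in the inhomogeneous estimate, where the relevant exponents are negative, so that the time gain is legitimately extracted there rather than on the solution norm itself. Keeping careful track of which norm carries the $\delta$-restriction, and verifying that the admissible window $b\le b'<b+\varepsilon$ from Lemma~\ref{bilinear} is compatible with the window needed for Lemma~\ref{lem2}, is the delicate point; once the exponents are arranged consistently, the remaining estimates are routine applications of the quoted lemmas.
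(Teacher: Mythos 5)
Your overall strategy coincides with the paper's: a Picard/contraction argument on the Duhamel formulation in $X^{\sigma,s,b}_\delta$, using the linear (homogeneous and inhomogeneous) estimates, Lemma \ref{bilinear}, and Lemma \ref{lem2} applied at the dual level $b-1<b'-1$ to extract the factor $\delta^{b'-b}$. You correctly identify the one genuinely delicate point here, namely that Lemma \ref{lem2} requires exponents in $(-1/2,1/2)$ and therefore must be applied to the nonlinear term at level $b-1$, not to the solution norm at level $b$. (One small inaccuracy in your step (ii): Lemma \ref{bilinear} bounds $\|\partial_x(uv)\|_{X^{\sigma,s,b'-1}}$ by $\|u\|_{X^{\sigma,s,b}}\|v\|_{X^{\sigma,s,b}}$, so the right-hand side already carries exponent $b$, not $b'$; the correct order is inhomogeneous estimate, then Lemma \ref{lem2}, then the bilinear estimate. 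Your ``obstacle'' paragraph effectively corrects this, so it is only a bookkeeping slip.)

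The one genuine gap is uniqueness. The fixed-point argument yields uniqueness only within the space $X^{\sigma,s,b}_\delta$ (or the ball on which the contraction acts), whereas the theorem asserts uniqueness in the larger class $C([0,\delta],G^{\sigma,s})$, and an arbitrary solution in that class need not a priori lie in $X^{\sigma,s,b}_\delta$. The paper closes this by a separate energy argument: for two solutions $u,v\in C_tG^{\sigma,s}$ with the same data, the difference $w=u-v$ satisfies $w_t+\alpha w_{xxx}+\beta w_{xxxxx}+wu_x+vw_x=0$; multiplying by $w$, integrating by parts (the dispersive terms drop out), and using that $G^{\sigma,s}\subset H^2\subset W^{1,\infty}$ for $\sigma>0$ to bound $\|u_x\|_{L^\infty}+\|v_x\|_{L^\infty}$, one gets $\frac{d}{dt}\|w\|_{L^2}^2\leq C\|w\|_{L^2}^2$ and concludes $w=0$ by Gr\"onwall. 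Your proposal should either include such an argument or restrict the uniqueness claim to the Bourgain class; as written, the statement as formulated is not fully proved.
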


\begin{proof}
Fix $\sigma >0$, $s>-7/4$ and $u_0\in G^{\sigma,s}$.
By Lemma \ref{lem00} we shall employ an iteration argument in the space $X^{\sigma,s,b}_\delta$ instead of $G^{\sigma,s}$.

Consider first the Cauchy problem for the linearised Kawahara equation
$$
\begin{cases}
u_t + \alpha u_{xxx} + \beta u_{xxxxx} = F(t,x),\\
u(0,x)=f(x).
\end{cases}
$$
By Duhamel's principle, the solution can be then written as
\begin{equation}\label{DF}
u(t,x)=e^{itp(-i\partial_x)}f(x)+\int_0^t e^{i(t-t')p(-i\partial_x)}F(t',\cdot)dt',
\end{equation}
where the Fourier multiplier $e^{itp(-i\partial_x)}$ with symbol $e^{itp(\xi)}$ is given by
$$e^{itp(-i\partial_x)}f(x)=\frac1{(2\pi)}\int_{\mathbb{R}}e^{ix\xi}e^{itp(\xi)}\widehat{f}(\xi)d\xi.$$
(Recall from Section \ref{sec2} that $p(\xi)=\alpha \xi^3 -\beta\xi^5$.)
Then the following $X_\delta^{\sigma,s,b}$-energy estimate follows directly from Proposition 2.1 in \cite{CLMW} (see also \cite{KPV}).

\begin{lem}\label{lem1}
Let $\sigma \geq 0$, $s\in\mathbb{R}$, $1/2<b\leq 1$ and $0<\delta\leq 1$.
Then we have
$$\| e^{itp(-i\partial_x)}f\|_{X^{\sigma,s,b}_\delta} \leq C_b\|f\|_{G^{\sigma,s}}$$
and
$$\bigg\|\int^t_0 e^{i(t-t')p(-i\partial_x)}F(t',\cdot)dt' \bigg\|_{X^{\sigma,s,b}_\delta} \leq C_b\| F\|_{X^{\sigma,s,b-1}_\delta},$$
with a constant $C_b>0$ depending only on $b$.
\end{lem}

Now, let $\left\{ u^{(n)} \right\}^{\infty}_{n=0}$ be the sequence defined by
$$
\begin{cases}
u^{(0)}_t+\alpha u^{(0)}_{xxx} + \beta u^{(0)}_{xxxxx} = 0,\\
u^{(0)}(0) = u_0(x),
\end{cases}
$$
and for $n\in\mathbb{Z}^+$
$$
\begin{cases}
u^{(n)}_t + \alpha u^{(n)}_{xxx} +\beta u^{(n)}_{xxxxx} = -\frac{1}{2}\partial_x\big(u^{(n-1)}u^{(n-1)}\big),\\
u^{(n)}(0) = u_0(x).
\end{cases}
$$
Applying \eqref{DF}, we first write
$$u^{(0)}(t,x) = e^{itp(-i\partial_x)}u_0(x)$$
and
$$u^{(n)}(t,x) =e^{itp(-i\partial_x)}u_0(x) - \frac{1}{2} \int_0^t e^{i(t-t')p(-i\partial_x)}\partial_x\big(u^{(n-1)}(t',\cdot)u^{(n-1)}(t',\cdot)\big) dt'.$$
By Lemma \ref{lem1} we have
\begin{equation}\label{0step}
\| u^{(0)}\|_{X^{\sigma,s,b}_\delta}\leq C_b\| u_0 \|_{G^{\sigma,s}},
\end{equation}
and Lemmas \ref{lem1}, \ref{lem2} and \ref{bilinear} combined imply
\begin{align}\label{nstep}
\nonumber\| u^{(n)} \|_{X^{\sigma,s,b}_\delta} &\leq C_b\| u_0\|_{G^{\sigma,s}} +
C_b\big\| \partial_x\big(u^{(n-1)}u^{(n-1)}\big)\big\|_{X^{\sigma,s,b-1}_\delta}\\
\nonumber&\leq C_b\| u_0\|_{G^{\sigma,s}} +C_bC_{b,b'}\delta^{b'-b}\big\| \partial_x\big(u^{(n-1)}u^{(n-1)}\big)\big\|_{X^{\sigma,s,b'-1}_\delta}\\
&\leq C_b\| u_0\|_{G^{\sigma,s}} +C_bC_{b,b'}C_{s,b,b'}\delta^{b'-b}\| u^{(n-1)} \|^{2}_{X^{\sigma,s,b}_{\delta}}
\end{align}
with $1/2<b<b^\prime<1$.
By induction together with \eqref{0step} and \eqref{nstep}, it follows that for all $n\geq0$
\begin{equation}\label{proof}
\| u^{(n)}\|_{X^{\sigma,s,b}_\delta} \leq 2C_b\| u_0\|_{G^{\sigma,s}},
\end{equation}
if we choose $\delta$ sufficiently small so that
\begin{equation}\label{delttt}
\delta^{b'-b}\|u_0\|_{G^{s,b}}\leq \frac1{8C_{b,b'}C_{s,b,b'}C_b^2}.
\end{equation}
Using Lemmas \ref{lem1}, \ref{lem2} and \ref{bilinear} together with \eqref{proof} and \eqref{delttt} in that order, we therefore get
\begin{align*}
\| u^{(n)}-&u^{(n-1)}\|_{X^{\sigma,s,b}_\delta}\\
&\leq C_b\big\| \partial_x\big(u^{(n-1)}u^{(n-1)}-u^{(n-2)}u^{(n-2)}\big)\big\|_{X^{\sigma,s,b-1}_\delta} \\
&\leq C_bC_{b,b'}\delta^{b'-b}\big\| \partial_x\big(u^{(n-1)}u^{(n-1)}-u^{(n-2)}u^{(n-2)}\big) \big\|_{X^{\sigma,s,b'-1}_\delta} \\
&\leq C_bC_{b,b'}C_{s,b,b'}\delta^{b'-b}\big\| u^{(n-1)} + u^{(n-2)} \big\|_{X^{\sigma,s,b}_\delta} \big\| u^{(n-1)} - u^{(n-2)} \big\|_{X^{\sigma,s,b}_\delta} \\
&\leq 4C_b^2C_{b,b'}C_{s,b,b'}\delta^{b'-b}\|u_0\|_{G^{s,b}}\big\| u^{(n-1)} - u^{(n-2)} \big\|_{X^{\sigma,s,b}_\delta}\\
&\leq \frac{1}{2}\big\| u^{(n-1)} - u^{(n-2)} \big\|_{X^{\sigma,s,b}_\delta},
\end{align*}
which guarantees the convergence of the sequence $\left\{ u^{(n)} \right\}^{\infty}_{n=0}$ to a solution $u$
with the bound \eqref{proof}.
Furthermore, \eqref{deldel} follows easily from \eqref{delttt} and $0<b'-b<1/2$.

Now assume that $u$ and $v$ are solutions to the Cauchy problem $\eqref{Kawahara}$ for initial data $u_0$ and $v_0$, respectively.
Then similarly as above, again with the same choice of $\delta$ and for any $\delta'$ such that $0<\delta^\prime<\delta$, we have
$$ \| u-v \|_{X^{\sigma,s,b}_{\delta^\prime}} \leq C_b\| u_0 -v_0 \|_{G^{\sigma,s}} + \frac{1}{2}\| u-v \|_{X^{\sigma,s,b}_{\delta^\prime}}$$
provided $\| u_0-v_0\|_{G^{\sigma,s}}$ is sufficiently small, which proves the continuous dependence of the solution on the initial data.

Finally, it remains to show the uniqueness of solutions.
Assume $u, v \in C_tG^{\sigma,s}$ are solutions to $\eqref{Kawahara}$ for the same initial data $u_0$ and let $w=u-v$. Then $w$ satisfies $w_t + \alpha w_{xxx} + \beta w_{xxxxx} + wu_x + vw_x =0$.
Multiplying both sides by $w$ and integrating in space yields
$$
\frac12\int_\mathbb{R} (w^2)_t dx +\alpha\int_\mathbb{R} ww_{xxx} dx +\beta\int_\mathbb{R} ww_{xxxxx} dx
+ \int_\mathbb{R} w^2u_x dx +\int_\mathbb{R} wvw_x dx= 0.
$$
Using $2wvw_x=(vw^2)_x-v_xw^2$ and integrating by parts, we then have
\begin{align*}
\frac{1}{2}\int_\mathbb{R} (w^2)_t dx -\frac{\alpha}{2}\int_\mathbb{R} (w_xw_x)_x dx
&+\frac{\beta}{2}\int_\mathbb{R} (w_{xx}w_{xx})_{x} dx+ \int_\mathbb{R} w^2u_x dx\\
&+\frac12\int_\mathbb{R} (vw^2)_x dx-\frac12\int_\mathbb{R} v_xw^2 dx=0.
\end{align*}
We may here assume that $w$ and its all spatial derivatives decay to zero as $|x|\rightarrow \infty$.\footnote{This property can be shown by approximation using the monotone convergence theorem
and the Riemann-Lebesgue lemma whenever $u\in X_\delta^{\sigma,1,b}$. See the argument in \cite{SS}, p. 1018.}
It follows then that
$$\frac{1}{2}\int_\mathbb{R} (w^2)_t dx =- \int_\mathbb{R} w^2u_x dx
+\frac12\int_\mathbb{R} v_xw^2 dx.$$
By H\"older's inequality, this implies
\begin{align*}
{\frac{d}{dt}}\| w(t) \|^2_{L^2_x} &\leq 2\big(\| u_x(t)\|_{L^\infty_x}  +\|v_x(t)\|_{L^{\infty}_x}\big)\| w(t) \|^2_{L^2_x}\\
&\leq C\| w(t) \|^2_{L^2_x}.
\end{align*}
Here we used the fact that
$$
G^{\sigma,s} \subseteq G^{0,2}=H^2 \subseteq L^q
$$
for all $2\leq q\leq\infty$ and $\sigma>0$.
By Gr\"onwall's inequality, we now conclude that $w=0$.
\end{proof}

\subsection{Almost conservation law}\label{subsec4.2}
We have established the existence of local solutions; we would like to apply the local result repeatedly to cover time intervals of arbitrary length. This, of course, requires some sort of control on the growth of the norm on which the local existence time depends.
The following approximate conservation will allow us (see Section \ref{sec5}) to repeat the local result on successive short-time intervals to reach any target time $T>0$, by adjusting the strip width parameter $\sigma$ according to the size of $T$.

\begin{thm}\label{thm3}
Let $0\leq\rho\leq 1$, $\frac{1}{2}<b<1$ and $\delta$ be as in Theorem \ref{thm2}. Then there exists $C>0$ such that for any $\sigma > 0$ and any solution $u \in X^{\sigma,0,b}_{\delta}$ to the Cauchy problem \eqref{Kawahara} on the time interval $[0,\delta]$, we have the estimate
\begin{equation}\label{acl00}
\sup_{t\in[0,\delta]}\| u(t)\|^2_{G^{\sigma,0}} \leq \| u(0)\|^2_{G^{\sigma,0}} + C\sigma^\rho\| u \|^3_{X^{\sigma,0,b}_\delta}.
\end{equation}
\end{thm}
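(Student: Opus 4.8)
The plan is to track the evolution of $\|u(t)\|_{G^{\sigma,0}}^2 = \|U(t)\|_{L^2_x}^2$, where $U := e^{\sigma|\partial_x|}u$, and to show its growth is governed exactly by the error term $f(u)$ of Lemma \ref{f}, for which the gain $\sigma^\rho$ has already been extracted.

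First I would derive a pointwise-in-time energy identity. Applying $e^{\sigma|\partial_x|}$ to \eqref{Kawahara} and using that this multiplier commutes with $\partial_x$ gives
\[
U_t + \tfrac12\partial_x e^{\sigma|\partial_x|}(u^2) + \alpha\partial_x^3 U + \beta\partial_x^5 U = 0.
\]
Pairing with $U$ in $L^2_x$, the two dispersive terms vanish since odd-order derivatives are skew-adjoint, i.e. $\int(\partial_x^3 U)U\,dx = \int(\partial_x^5 U)U\,dx = 0$ by integration by parts. For the nonlinear term I invoke the definition \eqref{fff}, which reads $f(u) = \tfrac12\partial_x(U^2) - \tfrac12\partial_x e^{\sigma|\partial_x|}(u^2)$; combined with $\int \partial_x(U^2)\,U\,dx = \tfrac23\int\partial_x(U^3)\,dx = 0$ this yields
\[
\tfrac12\tfrac{d}{dt}\|U(t)\|_{L^2_x}^2 = \langle f(u(t)),U(t)\rangle_{L^2_x}.
\]
Integrating over $[0,t]$ for $t\in[0,\delta]$ then gives the exact relation
\[
\|u(t)\|_{G^{\sigma,0}}^2 = \|u(0)\|_{G^{\sigma,0}}^2 + 2\int_0^t \langle f(u),U\rangle_{L^2_x}\,dt'.
\]

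Next I would estimate the error integral uniformly in $t$. Writing it as a space-time pairing $\langle \chi_{[0,t]}f(u),U\rangle_{L^2_{t,x}}$ (legitimate since $[0,t]\subseteq[0,\delta]$), I use the duality between $X^{0,b-1}$ and $X^{0,1-b}$ coming from Plancherel and Cauchy--Schwarz, bounding it by $\|\chi_{[0,t]}f(u)\|_{X^{0,b-1}}\,\|u\|_{X^{\sigma,0,1-b}_\delta}$ after taking an infimum over extensions of $U$. Since $1/2<b<1$ places $b-1$ in $(-1/2,1/2)$, Lemma \ref{lem3} replaces the first factor by $\|f(u)\|_{X^{0,b-1}_\delta}$, and Lemma \ref{f} (applied to an extension of $u$, then taking the infimum) bounds this by $\sigma^\rho\|u\|_{X^{\sigma,0,b}_\delta}^2$. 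For the second factor, monotonicity of the $X$-norm in its last index (valid as $1-b\le b$) gives $\|u\|_{X^{\sigma,0,1-b}_\delta}\lesssim\|u\|_{X^{\sigma,0,b}_\delta}$. Multiplying, the error is $\lesssim\sigma^\rho\|u\|_{X^{\sigma,0,b}_\delta}^3$ independently of $t\in[0,\delta]$, and taking the supremum produces \eqref{acl00}.

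The hard part will not be the estimate — the genuine analytic gain $\sigma^\rho$ sits inside Lemma \ref{f}, which I may assume — but rather the rigorous justification of the energy identity. The integration by parts that annihilates the dispersive terms and the exact cancellation $\int\partial_x(U^3)\,dx=0$ both presuppose spatial decay and regularity that a generic solution in $X^{\sigma,0,b}_\delta$ need not visibly have. I would circumvent this by first proving the identity for smooth, rapidly decaying solutions (e.g. data in $G^{\sigma,s}$ with $s$ large, so that $u\in X^{\sigma,1,b}$ and the required decay is available as in the uniqueness argument of Theorem \ref{thm2}), and then passing to the general case by approximation, using the continuous dependence from Theorem \ref{thm2} and the continuity of both sides of \eqref{acl00} under $X^{\sigma,0,b}_\delta$-convergence. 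The remaining subtlety — handling the time-restricted Bourgain norms via extensions and infima — is routine given Lemmas \ref{lem2} and \ref{lem3}.
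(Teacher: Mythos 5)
Your proposal is correct and follows essentially the same route as the paper: conjugate the equation by $e^{\sigma|\partial_x|}$, derive the energy identity in which the dispersive and cubic terms cancel, and estimate the commutator error $f(u)$ via the $X^{0,b-1}$--$X^{0,1-b}$ duality together with Lemma \ref{lem3}, Lemma \ref{f}, and the monotonicity $1-b\le b$. The only difference is that you spell out the approximation argument justifying the integration by parts, which the paper dispatches in a footnote by reference to the argument in \cite{SS}.
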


\begin{proof}
Let $0\leq \delta ' \leq \delta$. Setting $v(t,x)=e^{\sigma|\partial_x|}u(t,x)$ and applying $e^{\sigma|\partial_x|}$ to \eqref{Kawahara}, we obtain
$$
v_t+\alpha v_{xxx}+\beta v_{xxxxx} + vv_x = f(u),
$$
where $f(u)$ is as in \eqref{fff}:
$$
f(u)=\frac{1}{2}\partial_x\Big((e^{\sigma|\partial_x|}u)^2-e^{\sigma|\partial_x|}u^2\Big).
$$
Multiplying both sides by $v$ and integrating in space yield
$$
\int_\mathbb{R} vv_t dx +\alpha\int_\mathbb{R} vv_{xxx} dx +\beta\int_\mathbb{R} vv_{xxxxx} dx + \int_\mathbb{R} v^2v_x dx = \int_\mathbb{R} vf(u) dx.
$$
As before, we may here assume that $v$ and its all spatial derivatives decay to zero as $|x|\rightarrow \infty$.
Using this fact and integration by parts, we have
$$
\frac{1}{2}\int_\mathbb{R} (v^2)_t dx -\frac{\alpha}{2}\int_\mathbb{R} (v_xv_x)_x dx +\frac{\beta}{2}\int_\mathbb{R} (v_{xx}v_{xx})_{x} dx+ \frac{1}{3}\int_\mathbb{R} (v^3)_x dx = \int_\mathbb{R} vf(u) dx,
$$
and furthermore, the second, third and fourth terms on the left side vanish.
Subsequently integrating in time over the interval $[0,\delta ']$, we obtain
$$
\| u(\delta ')\|^2_{G^{\sigma,0}}\leq  \| u(0)\|^2_{G^{\sigma,0}} + 2 \bigg | \int_{\mathbb{R}^2} \chi_{[0,\delta ']}(t)vf(u) dtdx \bigg |.
$$
Now by H\"older's inequailty, Lemma \ref{lem3} and Lemma \ref{f},
\begin{align*}
\bigg | \int_{\mathbb{R}^2} \chi_{[0,\delta ']}(t)vf(u) dtdx \bigg |
&\leq \| \chi_{[0,\delta ']}(t)v\|_{X^{0,1-b}}\| \chi_{[0,\delta ']}(t)f(u)\|_{X^{0,b-1}}\\
&\leq C\|v\|_{X^{0,1-b}_{\delta '}}\|f(u)\|_{X^{0,b-1}_{\delta '}}\\
&\leq C\|u\|_{X^{\sigma,0,1-b}_{\delta '}}\sigma^\rho\| u\|^2_{X^{\sigma,0,b}_{\delta '}}.
\end{align*}
Since $1-b<b$, we therefore get
$$
\sup_{t\in[0,\delta]}\| u(t)\|^2_{G^{\sigma,0}} \leq \| u(0)\|^2_{G^{\sigma,0}} + C\sigma^\rho\| u \|^3_{X^{\sigma,0,b}_\delta}
$$
as desired.
\end{proof}

\section{Proof of Theorem \ref{thm1}}\label{sec5}
By invariance of the Kawahara equation under the reflection $(t.x)\rightarrow(-t,-x)$, we may restrict to positive times. By the embedding \eqref{emb}, the general case $s\in\mathbb{R}$ will reduce to $s=0$ as shown in the end of this section.

\subsection{The case $s=0$}
Combining \eqref{lowe} and \eqref{acl00}, we first note that
\begin{equation}\label{acl01}
\sup_{t\in[0,\delta]}\| u(t)\|^2_{G^{\sigma,0}} \leq \| u(0)\|^2_{G^{\sigma,0}} + C\sigma^\rho\| u(0) \|^3_{G^{\sigma,0}}.
\end{equation}
Let $u_0=u(0)\in G^{\sigma_0,0}$ for some $\sigma_0>0$ and $\delta$ be as in Theorem \ref{thm2}. For arbitrarily large $T$, we want to show that the soution $u$ to \eqref{Kawahara} satisfies
$$
u(t)\in G^{\sigma(t),0} \quad\textrm{for all }\, t\in [0,T],
$$
where
\begin{equation}\label{sigma}
\sigma(t)\geq\frac{c}{T}
\end{equation}
with a constant $c>0$ depending on $\| u_0\|_{G^{\sigma_0,0}}$ and $\sigma_0$.

Now fix $T$ arbitrarily large. It suffices to show
\begin{equation}\label{label}
\sup_{t\in [0,T]}\| u(t)\|^2_{G^{\sigma,0}} \leq 2\| u_0\|^2_{G^{\sigma_0,0}}
\end{equation}
for $\sigma$ satisfying \eqref{sigma}, which in turn implies $u(t)\in G^{\sigma(t),0}$ as desired.

To prove \eqref{label}, we first choose $n\in\mathbb{Z}^+$ so that $n\delta\leq T\leq(n+1)\delta$. Using induction we shall show for any $k\in\left\{1,2,\cdots,n+1\right\}$ that
\begin{equation}\label{label1}
\sup_{t\in[0,k\delta]} \| u(t)\|^2_{G^{\sigma,0}} \leq \| u_0\|^2_{G^{\sigma,0}} + kC\sigma^\rho 2^{3/2}\| u_0\|^3_{G^{\sigma_0,0}}
\end{equation}
and
\begin{equation}\label{label2}
\sup_{t\in [0,k\delta]}\| u(t)\|^2_{G^{\sigma,0}} \leq 2\| u_0\|^2_{G^{\sigma_0,0}},
\end{equation}
provided $\sigma$ satisfies
\begin{equation}\label{label3}
\sigma\leq\sigma_0\quad\text{and}\quad\frac{2T}{\delta}C\sigma^\rho 2^{3/2}\| u_0\|_{G^{\sigma_0,0}}\leq 1.
\end{equation}
Indeed, for $k=1$, we have from \eqref{acl01} that
\begin{align*}
\sup_{t\in[0,\delta]}\| u(t)\|^2_{G^{\sigma,0}} &\leq \| u_0\|^2_{G^{\sigma,0}} + C\sigma^\rho\| u_0 \|^3_{G^{\sigma,0}}\\
&\leq 2\|u_0\|^2_{G^{\sigma_0,0}},
\end{align*}
where we used the fact that $\| u_0\|_{G^{\sigma,0}}\leq \| u_0\|_{G^{\sigma_0,0}}$ and $C\sigma^\rho \| u_0\|_{G^{\sigma_0,0}}\leq 1$
which are a direct consequence of \eqref{label3}.
Now assume \eqref{label1} and \eqref{label2} hold for some $k\in\left\{1,2,\cdots,n\right\}$. Applying \eqref{acl01}, \eqref{label2} and \eqref{label1}, we then have
\begin{align*}
\sup_{t\in[k\delta,(k+1)\delta]}\|u(t)\|^2_{G^{\sigma,0}}&\leq \| u(k\delta)\|^2_{G^{\sigma,0}} + C\sigma^\rho\| u(k\delta)\|^3_{G^{\sigma,0}}\\
&\leq \| u(k\delta)\|^2_{G^{\sigma,0}} + C\sigma^\rho 2^{3/2}\| u_0\|^3_{G^{\sigma_0,0}}\\
&\leq \| u_0\|^2_{G^{\sigma,0}} + C\sigma^\rho (k+1) 2^{3/2}\| u_0\|^3_{G^{\sigma_0,0}}.
\end{align*}
Combining this with the induction hypothesis \eqref{label1} for $k$, we get
\begin{equation}\label{label4}
\sup_{t\in[0,(k+1)\delta]}\|u(t)\|^2_{G^{\sigma,0}}\leq\| u_0\|^2_{G^{\sigma,0}} + C\sigma^\rho (k+1) 2^{3/2}\| u_0\|^3_{G^{\sigma_0,0}}
\end{equation}
which proves \eqref{label1} for $k+1$. Since $k+1\leq n+1\leq T/\delta+1\leq2T/\delta$, from \eqref{label3} we also get
$$
C\sigma^\rho (k+1) 2^{3/2}\| u_0\|_{G^{\sigma_0,0}}\leq \frac{2T}{\delta}C\sigma^\rho  2^{3/2}\| u_0\|_{G^{\sigma_0,0}}\leq 1
$$
which, along with \eqref{label4}, proves \eqref{label2} for $k+1$.

Finally, the condition \eqref{label3} is satisfied for
$$
\sigma = \bigg (\frac{\delta}{C2^{5/2}\|u_0\|_{G^{\sigma_0,0}}}\bigg )^{1/\rho}\bigg(\frac{1}{T}\bigg)^{1/\rho}.
$$
Particularly when $\rho=1$, the constant $c$ in \eqref{sigma} may be given as
$$
c=\frac{\delta}{C2^{5/2}\|u_0\|_{G^{\sigma_0,0}}}
$$
which depends only on $\| u_0 \|_{G^{\sigma_0,0}}$.

\subsection{The general case $s\in\mathbb{R}$.}
Recall that \eqref{emb} states
$$
G^{\sigma,s}\subset G^{\sigma^\prime,s^\prime}\ \textrm{ for all }\ \sigma>\sigma'\geq 0\ \textrm{ and }\ s,s'\in\mathbb{R}\textrm{.}
$$
For any $s\in\mathbb{R}$ we use this embedding to get
$$
u_0\in G^{\sigma_0,s}\subset G^{\sigma_0/2,0}.
$$
From the local well-posedness result, there is a $\delta=\delta(\| u_0\|_{G^{\sigma_0/2,0}})$ such that
$$
u(t)\in G^{\sigma_0/2,0}\quad\textrm{for }\ 0\leq t\leq\delta\textrm{.}
$$
Similarly as in the case $s=0$, for $T$ fixed greater than $\delta$, we have $u(t)\in G^{\sigma^\prime,0}$ for $t\in[0,T]$
and $\sigma^\prime\geq c/T$ with $c>0$ depending on $\| u_0 \|_{G^{\sigma_0/2,0}}$ and $\sigma_0$.
Applying the embedding again, we conclude
$$
u(t)\in G^{\sigma,s}\quad\text{for}\,\ t\in [0,T]
$$
where $\sigma =\sigma^\prime/2$.

\section{Proof of Lemma \ref{bilinear}}\label{sec6}

This last section is devoted to the proof of Lemma \ref{bilinear}.

\subsection{Preliminaries}
Before we begin the proof, we shall introduce some notations in Tao's $[k;Z]$-multiplier norm method \cite{T2}
with $k=3$ and $Z=\mathbb{R}^2$. Let $\Gamma_3(\mathbb{R}^2)$ denote the \textit{hyperplane}
$$\Gamma_3(\mathbb{R}^2) :=\{(\rho_1, \rho_2, \rho_3)\in (\mathbb{R}^2)^3 : \rho_1 + \rho_2 + \rho_3 =(0, 0)\}$$
where each $\rho_i=(\tau_i,\xi_i)$ is an ordered pair of real numbers $\tau_i,\xi_i$.
Note here that $\tau_1+\tau_2+\tau_3=\xi_1 + \xi_2 + \xi_3=0$.
We will also denote $\rho$, $\xi$ and $\tau$ the triplets $(\rho_1, \rho_2, \rho_3)$, $(\tau_1, \tau_2, \tau_3)$ and $(\xi_1, \xi_2, \xi_3)$, respectively. We endow $\Gamma_3(\mathbb{R}^2)$ with the obvious measure
$$\int_{\Gamma_3(\mathbb{R}^2)} f(\rho):=\int_{(\mathbb{R}^2)^2} f(\rho_1, \rho_2, -\rho_1-\rho_2)\,d\rho_1 d\rho_2$$
where $d\rho_i=d\tau_i d\xi_i$ for $i=1,2$.

For any function $m : \Gamma_3(\mathbb{R}^2)  \rightarrow \mathbb{C}$, we define the $[3,\mathbb{R}^2]$-multiplier norm $\|m\|_{[3,\mathbb{R}^2]}$ to be the best constant $c$ such that the inequality
$$\bigg|\int_{\Gamma_3(\mathbb{R}^2)} m(\rho)\prod_{j=1}^3 f_j(\rho_j)\bigg|\leq c\prod_{j=1}^3\|f_j\|_{L^2(\mathbb{R}^{2})}$$
holds for all test functions $f_j$ on $\mathbb{R}^{2}$.

Capitalised variables such as $N_j$, $L_j$, $H$ are presumed to be dyadic, i.e., these variables range over numbers of the form $2^k$ for $k\in\mathbb{Z}$. Let $N_1$, $N_2$, $N_3>0$. It will be convenient to define the quantities $N_{max}\geq N_{med}\geq N_{min}$ to be the maximum, median and minimum of $N_1$, $N_2$, $N_3$, respectively.
Similarly we define $L_{max}\geq L_{med}\geq L_{min}$ whenever $L_1$, $L_2$, $L_3>0$. We also adopt the following summation conventions. Any summation of the form $L_{max}\sim\cdots$ is a sum over the three dyadic variables $L_1$, $L_2$, $L_3\gtrsim 1$, thus for instance
$$\sum_{L_{max}\sim H}:=\sum_{L_1, L_2, L_3\gtrsim 1:\ L_{max}\sim H}.$$
Similarly, any summation of the form $N_{max}\sim\cdots$ is a sum over the three dyadic variables $N_1$, $N_2$, $N_3>0$, thus for instance
$$\sum_{N_{max}\sim N_{med}\sim H}:=\sum_{N_1, N_2, N_3:\ N_{max}\sim N_{med}\sim H}.$$

If $\tau_j$ and $\xi_j$ are given for $j=1,2,3$, we define
$$\lambda_j:=\tau_j-p(\xi_j),$$
where $p(\xi_j)=\alpha \xi_j^3 -\beta\xi_j^5$ is as in Section $\ref{sec2}$.
Here we will let
$$h(\xi):=p(\xi_1)+p(\xi_2)+p(\xi_3)=-\lambda_1-\lambda_2-\lambda_3.$$
By dyadic decomposition of $\xi_j$, $\lambda_j$ and $|h(\xi)|$, one is led to consider
\begin{equation}\label{el1}
\|X_{N_1, N_2, N_3; H; L_1, L_2, L_3}\|_{[3;\mathbb{R}^2]}
\end{equation}
where $X_{N_1, N_2, N_3; H; L_1, L_2, L_3}$ is the multiplier
$$X_{N_1, N_2, N_3; H; L_1, L_2, L_3}(\xi, \tau):=\chi_{|h(\xi)|\sim H}\prod_{j=1}^3\chi_{|\xi_j|\sim N_j}\chi_{|\lambda_j|\sim L_j}.$$
Since the following identities
\begin{equation}\label{el2}
\xi_1+\xi_2+\xi_3=0
\end{equation}
and
\begin{equation}\label{el3}
\lambda_1+\lambda_2+\lambda_3+h(\xi)=0
\end{equation}
hold, from the support of the multiplier, one can see that $X_{N_1, N_2, N_3; H; L_1, L_2, L_3}$ vanishes unless
both
\begin{equation}\label{duck}
N_{max}\sim N_{med}\quad \text{and}\quad L_{max}\sim\max\{H, L_{med}\}
\end{equation}
are satisfied.
We then recall the following fundamental estimates on dyadic blocks for the Kawahara equation.

\begin{lem}[\cite{CLMW}]\label{el4}
Let $H,N_1,N_2,N_3,L_1,L_2,L_3>0$ obey \eqref{el2}, \eqref{el3} and $N_{max}\sim N_{med}\gtrsim 1$.
Then
\begin{equation}\label{el5}
H\sim N_{max}^4 N_{min},
\end{equation}
and
\begin{itemize}
\item If $N_{max}\sim N_{min}$ and $L_{max}\sim H$,
\begin{equation}\label{el6}
\eqref{el1}\lesssim L_{min}^{1/2}N_{max}^{-2}L_{med}^{1/2}.
\end{equation}

\item If $N_{\sigma(2)}\sim N_{\sigma(3)}\gg N_{\sigma(1)}$ and $H\sim L_{\sigma(1)}\gtrsim L_{\sigma(2)},\ L_{\sigma(3)}$,
\begin{equation}\label{el7}
\eqref{el1}\lesssim L_{min}^{1/2}N_{max}^{-2}\min\Big\{ H, \frac{N_{max}}{N_{min}}L_{med}\Big\}^{1/2}
\end{equation}
where $\sigma$ is a permutation in $\{1,2,3\}$.

\item In all other cases,
\begin{equation}\label{el8}
\eqref{el1} \lesssim L_{min}^{1/2}N_{max}^{-2}\min\{ H, L_{med}\}^{1/2}.
\end{equation}
\end{itemize}
\end{lem}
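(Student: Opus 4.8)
The plan is to run Tao's $[3;\mathbb{R}^2]$-multiplier norm method from \cite{T2}: first establish the resonance identity \eqref{el5} by an algebraic computation, and then reduce each of the three multiplier-norm bounds to a one-dimensional measure estimate in a single frequency variable, controlled by the derivative of the resonance function $h$. The role of the hypotheses \eqref{el2}, \eqref{el3} and $N_{max}\sim N_{med}\gtrsim 1$ is precisely to make this resonance quantitatively nondegenerate.

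First I would prove \eqref{el5}. Since $\xi_1+\xi_2+\xi_3=0$, Newton's identities give $\xi_1^3+\xi_2^3+\xi_3^3=3\xi_1\xi_2\xi_3$ and $\xi_1^5+\xi_2^5+\xi_3^5=\tfrac{5}{2}\xi_1\xi_2\xi_3(\xi_1^2+\xi_2^2+\xi_3^2)$, whence
$$ h(\xi)=\xi_1\xi_2\xi_3\Big(3\alpha-\tfrac{5\beta}{2}(\xi_1^2+\xi_2^2+\xi_3^2)\Big). $$
Under $N_{max}\sim N_{med}\gtrsim 1$ we have $|\xi_1\xi_2\xi_3|\sim N_{max}^2 N_{min}$, and since $\beta\neq 0$ and $\xi_1^2+\xi_2^2+\xi_3^2\sim N_{max}^2$ the second factor is $\sim N_{max}^2$ once $N_{max}$ exceeds the threshold determined by $\alpha/\beta$ (the remaining bounded range of frequencies being absorbed into the implicit constants). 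This yields $|h(\xi)|\sim N_{max}^4 N_{min}$.

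Next I would apply the box-localization estimate of \cite{T2} (Cauchy--Schwarz in the convolution), which bounds $\|X_{N_1,N_2,N_3;H;L_1,L_2,L_3}\|_{[3;\mathbb{R}^2]}^2$ by the supremum over one frozen variable $\rho_j$ of the fiber integral $\int X\,d\rho_i$. Integrating the two free time variables first, the convolution of the two modulation intervals contributes the minimum of their lengths and is nonvanishing only where $|h(\xi)+\lambda_j|\lesssim$ their maximum; freezing the variable that carries $L_{max}$ extracts the universal factor $L_{min}$ and reduces the problem to the spatial measure
$$ \big|\{\xi_i:\,|h(\xi)|\sim H,\ |h(\xi)+\lambda_j|\lesssim L_{med}\}\big|\lesssim \frac{\min\{H,L_{med}\}}{\inf|h'|}, $$
where along the fiber $h'=p'(\xi_i)-p'(\xi_\ell)=(\xi_i-\xi_\ell)(\xi_i+\xi_\ell)\big(3\alpha-5\beta(\xi_i^2+\xi_\ell^2)\big)$. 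The size of $\inf|h'|$ is exactly what separates the three regimes of \eqref{duck}.

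Finally I would split according to the transversality realizable by the freezing choice. Whenever one may freeze a high-frequency variable carrying the largest modulation, the factorization forces the partner frequency to be high as well while $|\xi_i+\xi_\ell|$ equals the third frequency, so $|h'|\sim N_{max}^4$; taking square roots reproduces the generic bound $L_{min}^{1/2}N_{max}^{-2}\min\{H,L_{med}\}^{1/2}$, which is Case 3 and---because there $L_{max}\sim H\ge L_{med}$---also Case 1. The main obstacle is the resonant Case 2, where two frequencies are comparably large, the third is much smaller, and the dominant modulation $\sim H$ sits on the small-frequency piece. Keeping that modulation frozen forces differentiation of $h$ along a high-frequency fiber whose partner is again high, so $|\xi_i-\xi_\ell|\sim N_{max}$ but $|\xi_i+\xi_\ell|\sim N_{min}$ and the transversality degenerates to $|h'|\sim N_{max}^3 N_{min}$, producing the enlarged factor $\tfrac{N_{max}}{N_{min}}L_{med}$. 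To recover the competing bound $H$, I would instead freeze a high-frequency median-modulation piece and integrate over the low frequency, where $|h'|\sim N_{max}^4$ is restored but the compatibility window only limits $|h|$ up to $\sim H$; taking the better of the two strategies yields $\min\{H,\tfrac{N_{max}}{N_{min}}L_{med}\}$. Throughout, the critical points $\xi_i=\pm\xi_\ell$ of $h$, where $h'$ vanishes, are excluded from the support by the localization $|h|\sim H\neq 0$, so the measure bound through $1/\inf|h'|$ remains valid.
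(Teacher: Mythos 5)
The paper never proves this lemma---it is imported verbatim from \cite{CLMW}---so the comparison is with the proof there, which, like your sketch, runs Tao's $[3;\mathbb{R}^2]$-multiplier machinery. Your resonance identity $h(\xi)=\xi_1\xi_2\xi_3\bigl(3\alpha-\tfrac{5\beta}{2}(\xi_1^2+\xi_2^2+\xi_3^2)\bigr)$, the box-localization step, and the factorization $h'=(\xi_i-\xi_\ell)(\xi_i+\xi_\ell)\bigl(3\alpha-5\beta(\xi_i^2+\xi_\ell^2)\bigr)$ along a fiber are exactly the right ingredients, and your two-freezing treatment of the second bullet \eqref{el7} is sound.

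There is, however, a genuine gap in your treatment of the first bullet \eqref{el6}. Your closing sentence asserts that the critical points $\xi_i=\pm\xi_\ell$ of $h$ on the fiber are excluded from the support by $|h|\sim H\neq 0$. That is true for $\xi_i=-\xi_\ell$ (there $\xi_j=0$, hence $h=0$), but false for $\xi_i=+\xi_\ell$: at that point $h'=p'(\xi_i)-p'(\xi_\ell)=0$ while $h=\xi_i^2\,\xi_j\bigl(3\alpha-\tfrac{5\beta}{2}\sum_k\xi_k^2\bigr)\sim N_{max}^4N_{min}\sim H$, so the point sits squarely inside the support. This degeneracy occurs precisely in the balanced regime of \eqref{el6}, where the two integrated frequencies share a sign and may nearly coincide; there $\inf|h'|=0$ and your measure bound $\min\{H,L_{med}\}/\inf|h'|$ is vacuous. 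Your phrase ``the factorization forces the partner frequency to be high as well \ldots\ so $|h'|\sim N_{max}^4$'' controls the factors $|\xi_i+\xi_\ell|$ and $3\alpha-5\beta(\xi_i^2+\xi_\ell^2)$ but not $|\xi_i-\xi_\ell|$, which can vanish. Switching the frozen variable does not obviously repair this: freezing one of the two parallel frequencies restores transversality but enlarges the $h$-window from $L_{med}$ to $L_{max}\sim H$, which is too weak when $L_{med}\ll H$. The genuine proof must exploit the algebraic structure of $h$ near this critical point (e.g.\ via $4\xi_i\xi_\ell=\xi_j^2-(\xi_i-\xi_\ell)^2$ and a sublevel-set estimate for the resulting even polynomial in $\xi_i-\xi_\ell$, as in Tao's $(++-)$ coherence analysis for KdV), and your sketch offers no substitute for that step. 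A secondary issue: when $\alpha\beta>0$ the factor $3\alpha-\tfrac{5\beta}{2}\sum_k\xi_k^2$ genuinely vanishes on a set compatible with $N_{max}\sim N_{med}\sim 1$, so the two-sided asymptotic \eqref{el5} cannot be rescued by ``absorbing into the implicit constants''; that bounded-frequency region has to be excised or treated by a separate argument.
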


\subsection{Proof}
Now we turn to the proof of Lemma \ref{bilinear}.
By the definition of $X^{s,b}$-norms and the dual characterisation of $L^2$ space, we may show that
\begin{equation}\label{app01}
\bigg \| \frac{(\xi_1+\xi_2)\langle\xi_1\rangle^{-s}\langle\xi_2\rangle^{-s}
\langle\xi_3\rangle^s}{\langle\tau_1-p(\xi_1)\rangle^b\langle\tau_2-p(\xi_2)\rangle^b\langle\tau_3-p(\xi_3)\rangle^{1-b'}} \bigg \|_{[3;\mathbb{R}^2]}\lesssim 1.
\end{equation}
To show this, we first decompose dyadically $|\xi_j|,|\lambda_j|,|h(\xi)|$ as $|\xi_j|\sim N_j$, $|\lambda_j|\sim L_j$, $|h(\xi)|\sim H$.
By some properties of the $[k;Z]$-multiplier norm,
one may restrict the multiplier in \eqref{app01} to the region $L_j \gtrsim 1$ $(j=1,2,3)$ and $\max\{N_1, N_2, N_3\}\gtrsim 1$.
Consequently, it suffices to show that
\begin{align*}
\bigg\|\sum_{N_{max}\gtrsim 1} \sum_{H} \sum_{L_1,L_2,L_3\gtrsim1}
\frac{N_3\langle N_3\rangle^s}{\langle N_1\rangle^s\langle N_2\rangle^sL_1^bL_2^bL_3^{1-b'}} \chi_{N_1,N_2,N_3;H;L_1,L_2,L_3}\bigg\|_{[3;\mathbb{R}^2]}\lesssim 1.
\end{align*}
By Lemma 3.11 (Schur's test) in \cite{T2},
this estimate is reduced to showing that
\begin{align}\label{app02}
\nonumber\sum_{N_{max}\sim N_{med} \sim N} \sum_{H} \sum_{L_{max}\sim H}&
\frac{N_3\langle N_3\rangle^s}{\langle N_1\rangle^s\langle N_2\rangle^sL_1^bL_2^bL_3^{1-b'}}\\
&\quad\times\|\chi_{N_1,N_2,N_3;H;L_1,L_2,L_3}\|_{[3;\mathbb{R}^2]}\lesssim 1
\end{align}
and
\begin{align}\label{app03}
\nonumber\sum_{N_{max}\sim N_{med} \sim N}  \sum_{H}\sum_{L_{max} \sim L_{med}\gg H}&
\frac{N_3\langle N_3\rangle^s}{\langle N_1\rangle^s\langle N_2\rangle^sL_1^bL_2^bL_3^{1-b'}}\\
&\quad\times\|\chi_{N_1,N_2,N_3;H;L_1,L_2,L_3}\|_{[3;\mathbb{R}^2]}\lesssim 1
\end{align}
for all $N\gtrsim 1$.

\subsubsection{Proof of \eqref{app03}}
Since $N_{max}\sim N_{med}\sim N\gtrsim 1$, by (\ref{el5}) and (\ref{el8}), it suffices to show
\begin{equation}\label{app04}
\sum_{N_{max}\sim N_{med} \sim N} \sum_{L_{max}\sim L_{med}\gg N^4N_{min}} \frac{N_3\langle N_3\rangle^s}{\langle N_1\rangle^s\langle N_2\rangle^sL_1^bL_2^bL_3^{1-b'}}L^{1/2}_{min}N^{1/2}_{min} \lesssim 1.
\end{equation}
We only need to consider two cases: $N_1 \sim N_2 \sim N$, $N_3 =N_{min}$ and $N_1 \sim N_3\sim N$, $N_2=N_{min}$.
(The other case $N_2 \sim N_3\sim N$, $N_1=N_{min}$ then follows by symmetry.)

In the former case, the estimate \eqref{app04} can be further reduced to
$$
\sum_{N_{min}\lesssim N} \sum_{L_{max}\sim L_{med}\gg N^4N_{min}} \frac{N^{-2s}N_{min}\langle N_{min}\rangle^s}{L^b_{min}L^b_{med}L^{1-b'}_{max}}L^{1/2}_{min}N^{1/2}_{min} \lesssim 1
$$
since $1-b'<1/2<b$.
Then performing the $L$ summations, we reduce to
$$
\sum_{N_{min}\lesssim N}  \frac{N^{-2s}N_{min}^{3/2}\langle N_{min}\rangle^s}{(N^4 N_{min})^{1+b-b'}} \lesssim 1,
$$
which holds if $4(1+b-b')+2s>0$.
So we require $0\leq b'-b<1+s/2$, which is possible if $s>-2$.
Therefore, \eqref{app04} follows.

In the latter case, the estimate \eqref{app04} can be reduced to
$$
\sum_{N_{min}\lesssim N} \sum_{L_{max}\sim L_{med}\gg N^4N_{min}} \frac{N}{\langle N_{min}\rangle^sL^b_{min}L^b_{med}L^{1-b'}_{max}}L^{1/2}_{min}N^{1/2}_{min} \lesssim 1.
$$
Before performing the $L$ summations, we need to divide cases into two parts, $N_{min}\leq 1$ and $N_{min}\geq 1$, as follows:
\begin{align*}
&\sum_{N_{min}\leq 1}\sum_{L_{max}\sim L_{med}\gg N^4N_{min}} \frac{NN^{1/2}_{min}}{L^{b-1/2}_{min}L_{max}^{1-b+b'}}\\
+&\sum_{1\leq N_{min}\lesssim N} \sum_{L_{max}\sim L_{med}\gg N^4N_{min}} \frac{NN^{1/2-s}_{min}}{L^{b-1/2}_{min}L_{max}^{1-b+b'}} \lesssim 1.
\end{align*}
This holds clearly if $s>-7/2$, and therefore \eqref{app04} holds for $s>-7/2$.

\subsubsection{Proof of \eqref{el6}}

The conditions
$N_{max}\sim N_{med} \sim N\gtrsim 1$ and $L_{max}\sim H$ in \eqref{app02} are divided into three cases
corresponding to each of \eqref{el6}, \eqref{el7} and \eqref{el8}.
By (\ref{el5}) we also see that $L_{max}\sim N^4_{max}N_{min}$,
which is used repeatedly below.

\subsubsection*{The case for \eqref{el6}}

Now we shall consider the first case, which reduces to
\begin{equation}\label{app05}
\sum_{L_{max}\sim N^5} \frac{N^{-s}N}{L^b_{min}L^b_{med}L^{1-b'}_{max}}L^{1/2}_{min}N^{-2}L^{1/2}_{med}\lesssim 1.
\end{equation}
Performing the $L$ summations, we reduce to
$$
\frac{1}{N^{1+s}N^{5(1-b')}}\lesssim 1,
$$
which holds if $1+s+5(1-b')\geq0$. So we require $1/2< b'\leq(6+s)/5$, which is possible if $s>-7/2$.
Therefore, \eqref{app05} follows.

\subsubsection*{The case for \eqref{el7}}

Next we consider the second case which is divided into $N_1 \sim N_2 \gg N_3,H \sim L_3 \gtrsim L_1,L_2$ and
$N_2 \sim N_3 \gg N_1, H \sim L_1 \gtrsim L_2,L_3$.
(The other case $N_1 \sim N_3 \gg N_2, H \sim L_2 \gtrsim L_1,L_3$ then follows by symmetry.)
In the former case the estimate \eqref{app02} reduces to
\begin{equation}\label{app06}
\sum_{N_3 \ll N}\sum_{1\lesssim L_1,L_2\lesssim N^4N_3}\frac{N_3\langle N_3\rangle^s}{N^{2s}L^b_1L^b_2L^{1-b'}_3}L^{1/2}_{min}N^{-2}\min\bigg\{N^4N_3,\frac{N}{N_3}L_{med}\bigg\}^{1/2}\lesssim 1.
\end{equation}
We then decompose the left-hand side of \eqref{app06} into two parts $N_3\leq 1$ and $N_3> 1$:
\begin{align*}
&\sum_{N_3\leq 1}\sum_{1\lesssim L_1,L_2\lesssim N^4N_3} \frac{N_3\langle N_3\rangle^s}{N^{2s}L^b_1L^b_2L^{1-b'}_3}L^{1/2}_{min}N^{-2}\min\bigg\{N^4N_3,\frac{N}{N_3}L_{med}\bigg\}^{1/2}\\
&+ \sum_{1<N_3\ll N}\sum_{1\lesssim L_1,L_2\lesssim N^4N_3} \frac{N_3\langle N_3\rangle^s}{N^{2s}L^b_1L^b_2L^{1-b'}_3}L^{1/2}_{min}N^{-2}\min\bigg\{N^4N_3,\frac{N}{N_3}L_{med}\bigg\}^{1/2}\\
&=:I_1 +I_2.
\end{align*}
We first consider the part $I_1$. When $N^4N_3\geq \frac{N}{N_3}L_{med}$, which is equivalent to $N_3\geq( L_{med}/N^3)^{1/2}$,
we see that
$$
I_1\lesssim\sum_{N_3\leq1}\sum_{1\lesssim L_1,L_2\lesssim N^4N_3\atop (L_{med}/N^3)^{1/2}\leq N_3}
\frac{N_3}{N^{2s}L^{b-1/2}_{min}L^b_{med}(N^4N_3)^{1-b'}}N^{-2}N^{1/2}L^{1/2}_{med}N^{-1/2}_3.
$$
Performing the $N_3$ summation, we have the desired bound
$$
I_1 \lesssim\sum_{1\lesssim L_1,L_2\lesssim N^4}\frac{N^{-3/2}}{N^{2s}L^{b-1/2}_{min}L^{b-1/2}_{med}N^{4(1-b')}}
\lesssim 1$$
if $2s+4(1-b')+3/2>0$.
So we require $1/2<b\leq b'<(4s+11)/8$, which is possible if $s>-7/4$.
On the other hand, when $N_3<( L_{med}/N^3)^{1/2}$,
$$
I_1\lesssim\sum_{N_3\leq 1}\sum_{1\lesssim L_1,L_2\lesssim N^4N_3\atop N_3<( L_{med}/N^3)^{1/2}} \frac{N_3}{N^{2s}L^{b-1/2}_{min}L^b_{med}(N^4N_3)^{1-b'}}N^{-2}N^2 N^{1/2}_3.
$$
Performing the $N_3$ summation, we have
$$
I_{1} \lesssim \sum_{1\lesssim L_1,L_2\lesssim N^4}\frac{\min\{1,( L_{med}/N^3)^{\frac{1}{2}(\frac{1}{2}+b')}\}}{N^{2s}L^{b-1/2}_{min}L^{b}_{med}N^{4(1-b')}}
\lesssim 1
$$
if $2s+4(1-b')+\frac{3}{2}(\frac{1}{2}+b')>0$ when $(L_{med}/N^3)^{\frac{1}{2}(\frac{1}{2}+b')}\leq1$,
and if $2s+4+3b-4b'>0$ when $(L_{med}/N^3)^{\frac{1}{2}(\frac{1}{2}+b')}\geq1$.
So we require
$1/2<b\leq b'<(8s+19)/10$ and
 $1/2<b'+3(b'-b)<4+2s$,
 but this is possible if $s>-7/4$.
Now we consider the part $I_2$ that has the following trivial bound
\begin{equation*}
I_2\lesssim \sum_{1<N_3\ll N} \sum_{1\lesssim L_1,L_2\lesssim N^4N_3} \frac{N_3 N^s_3}{N^{2s}L^{b-1/2}_{min}L^b_{med}(N^4N_3)^{1-b'}}N^{-2}N^{1/2}L^{1/2}_{med}N^{-1/2}_3.
\end{equation*}
Performing the $N_3$ summation, we conclude that
$$
I_2\lesssim \sum_{1\lesssim L_1,L_2\lesssim N^5}\frac{\min\{1,N^{s-1/2+b'}\}}{N^{2s}L^{b-1/2}_{min}L^{b-1/2}_{med}N^{4(1-b')}N^{3/2}}\lesssim 1
$$
if $2s+4(1-b')+3/2>0$ when $s-1/2+b'< 0$, and
if $2s+4(1-b')+3/2>s-1/2+b'$ when $s-1/2+b'\geq 0$.
So we require $1/2<b\leq b'<\min\{(4s+11)/8,(s+6)/5\}$.
This is possible if $s>-7/4$.
Consequently, we get the desired estimate \eqref{app06} if $s>-7/4$.

Next we deal with the remaining case where $N_2\sim N_3 \gg N_1, H\sim L_1 \gtrsim L_2,L_3$.  In this case,
applying \eqref{el7} to \eqref{app02}, we may show
\begin{equation*}
\sum_{N_1\ll N}\sum_{1\lesssim L_2,L_3 \lesssim N^4N_1} \frac{N^{1+s}L^{1/2}_{min}}{N^s\langle N_1\rangle^sL^b_2L^{1-b'}_3(N^4N_1)^b}N^{-2}\min\bigg\{ H,\frac{N}{N_1}L_{med}\bigg\}^{1/2}\lesssim 1.
\end{equation*}
We first decompose the left-hand side of this inequality into two parts $N_1\leq 1$ and $N_1> 1$:
\begin{align*}
&\sum_{N_1\leq 1}\sum_{1\lesssim L_2,L_3 \lesssim N^4N_1} \frac{N^{1+s}}{N^sL^b_{min}L^{1-b'}_{med}(N^4N_1)^b}L^{1/2}_{min}N^{1/2}_1 \\
&+\sum_{1<N_1\ll N }\sum_{1\lesssim L_2,L_3 \lesssim N^4N_1} \frac{N^{1+s}}{N^s_1N^sL^b_{min}L^{1-b'}_{med}(N^4N_1)^b}L^{1/2}_{min}N^{-2}N^{5/4}L^{1/4}_{med} \\
&=: J_1+J_2.
\end{align*}
Here we used the fact that $H\sim N^4N_{min}$ and $\min\{a,b\}\leq\sqrt{ab}$.
Note first that the $L$ summations in $J_1$ vanish unless $N^4N_1\gtrsim 1$.
Using this,  we get
$$
J_1\lesssim \sum_{N^{-4}\lesssim N_1\leq 1} \frac{NN^{1/2-b}_1}{N^{4b}}\lesssim \frac{NN^{(-4)(1/2-b)}}{N^{4b}}\lesssim 1
$$
since $b>1/2$ and $N\gtrsim1$.
Performing the $L$ summations in $J_2$, we see that
$$
J_2\lesssim \sum_{1\leq N_1 \ll N} \frac{N^{1/4}N_1^{-s-b}}{N^{4b}}
$$
if we take $1/2< b\leq b'<3/4$. (Otherwise, the sum may diverge.)
When $-s-b>0$, we conclude
$$
J_2\lesssim  \frac{N^{1/4}N^{-s-b}}{N^{4b}}  \lesssim 1
$$
if $4b+s+b-1/4>0$.
So we require $\max\{1/2,(1/4-s)/5\}<b \leq b'<3/4$ and $b<-s$.
This is possible if $-7/2<s<-1/2$.
Furthermore, we can have the range  $1/2<b \leq b'<3/4$
if $-9/4\leq s<-1/2$.
When $-s-b\leq0$,
$$
J_2\lesssim \frac{N^{1/4}}{N^{4b}}\log_2(N)\lesssim 1
$$
which holds when $1/2<b\leq b'< 3/4$.
Consequently, we get the desired estimate.

\subsubsection*{The case for \eqref{el8}}

Lastly, using \eqref{el8}, the desired estimate \eqref{app02} reduces to
$$
\sum_{N_{max}\sim N_{med} \sim N} \sum_{L_{max}\sim N^4N_{min}}
\frac{N_3\langle N_3\rangle^s}{\langle N_1\rangle^s\langle N_2\rangle^sL^b_1L^b_2L^{1-b'}_3}L^{1/2}_{min}N^{-2}\min\{H,L_{med}\}^{1/2}\lesssim 1.
$$
To show this, we need to divide the case into $N_1 \sim N_2 \sim N,$ $N_3 = N_{min}$ and $N_1 \sim N_3 \sim N,$ $N_2 = N_{min}$.
(The other case $N_2 \sim N_3 \sim N,$ $N_1 = N_{min}$ then follows by symmetry.)
In the former case, the above estimate further reduces to
$$
\sum_{ N_3\ll N} \sum_{L_{max}\sim N^4N_3} \frac{N_3\langle N_3\rangle^s}{N^{2s}L^b_{min}L^b_{med}(N^4N_3)^{1-b'}}L^{1/2}_{min}N^{-2}L^{1/2}_{med}\lesssim 1.
$$
Then performing the $L$ summations, we reduce to
$$
\sum_{N_3\ll N} \frac{N_3\langle N_3\rangle^s}{N^{2+2s}N^{4(1-b')}N^{1-b'}_3} \lesssim 1,
$$
which holds if $2+2s+4(1-b')>0$.
So we require $1/2<b\leq b'<(s+3)/2$.
This is possible if $s>-2$.
In the latter case, we reduce to
\begin{equation*}
\sum_{ N_2\ll N} \sum_{L_{max}\sim N^4N_2}\frac{N^{1+s}L^{1/2}_{min}N^{-2}}{N^s \langle N_2\rangle^s L^b_{min}L^b_{med} L^{1-b'}_{max}} \min\{H,L_{med}\}^{1/2} \lesssim 1.
\end{equation*}
Using $\min\{a,b\}\leq\sqrt{ab}$ and then performing the $L$ summations, we further reduce to
\begin{equation}\label{app11}
\sum_{ N_2\ll N} \frac{N^{1+s}N^{-2}}{N^s \langle N_2\rangle^s(N^4N_2)^{1-b'}}(N^4N_2)^{1/4} \lesssim 1.
\end{equation}
We then decompose the left-hand side of \eqref{app11} into two parts $N_2\leq 1$ and $N_2> 1$:
\begin{align*}
&\sum_{N_2\leq 1} \frac{N^{1+s}N^{-2}}{N^s \langle N_2\rangle^s(N^4N_2)^{1-b'}}(N^4N_2)^{1/4}
+ \sum_{1< N_2\leq N} \frac{N^{1+s}N^{-2}}{N^s \langle N_2\rangle^s(N^4N_2)^{1-b'}}(N^4N_2)^{1/4}\\
&=: K_1+K_2.
\end{align*}
It is easy to see that $K_1\lesssim N^{-4(1-b')}\lesssim 1$ if $b'<3/4$.
On the other hand,
$$K_2\lesssim\frac{\min\{1,N^{b'-s-3/4}\}}{N^{4(1-b')}}\lesssim1$$
if $b'\leq1$ when $b'-s-3/4\geq 0$, and if $4(1-b')-(b'-s-3/4)>0$ when $b'-s-3/4<0$.
So we get the desired bound for all $s$ with $1/2<b\leq b'<\min\{(4s+19)/20,s+3/4\}$.
This completes the proof.

\end{document}